\newtheorem*{theorem*}{Theorem}
\newtheorem{theorem}{Theorem}[section]
\newtheorem{lemma}[theorem]{Lemma}
\newtheorem{cor}[theorem]{Corollary}
\theoremstyle{definition}
\theoremstyle{remark}
\newtheorem{remark}[theorem]{Remark}
\numberwithin{equation}{section}
\DeclareMathOperator*{\esssup}{ess\,sup}
\DeclareMathOperator*{\BMO}{BMO}
\begin{document}

\author{Israel P. Rivera-R\'{\i}os}
\address[1]{Department of Mathematics, University of the Basque Country, Bilbao, Spain}
\address[2]{BCAM - Basque Center for Applied Mathematics}
\email{petnapet@gmail.com}

\title[Improved $A_1-A_\infty$ and related estimates]{ Improved $A_1-A_\infty$ and related estimates for commutators of rough singular integrals}

\thanks{This research was supported by the Basque Government through the BERC 2014-2017 program and by the Spanish Ministry of Economy and Competitiveness MINECO through BCAM Severo Ochoa excellence accreditation SEV-2013-0323 and  also through the project MTM2014-53850-P}

\begin{abstract}An $A_1-A_\infty$ estimate improving a previous result in \cite{PRRR} for $[b,T_\Omega]$ with $\Omega\in L^\infty(\mathbb{S}^{n-1})$ and $b\in \BMO$ is obtained. Also a new result in terms of the ${A_\infty}$ constant and the one supremum $A_q-A_\infty^{\exp}$ constant is proved, providing a counterpart for commutators of the result obained in \cite{Li}. Both of the preceding results rely upon a sparse domination result in terms of bilinear forms which is established using techniques from \cite{LRough}.\end{abstract}

\keywords{Rough singular integrals, commutators, sparse bounds, weights.}
\subjclass[2010]{42B20, 42B25}

\maketitle

\section{Introduction}
We recall that a weight $w$, namely a non negative locally integrable function,  belongs to $A_p$ if
\[[w]_{A_p}=\sup_Q\left(\frac{1}{|Q|}\int_Qw\right)\left(\frac{1}{|Q|}\int_Qw^{\frac{1}{1-p}}\right)^{p-1}<\infty\quad 1<p<\infty\]
or in the case $p=1$ if
\[[w]_{A_1}=\esssup_{x\in\mathbb{R}^n}\frac{Mw(x)}{w(x)}<\infty.\]

Given $\Omega\in L(\mathbb{S}^{n-1})$ with $\int_{\mathbb{S}^{n-1}}\Omega=0$ we define the rough singular integral $T_\Omega$ by
\[T_\Omega f(x) = pv \int_{\mathbb{R}^n}\frac{\Omega(y')}{|y|^n}f(x-y)dy\]where $y'=\frac{y}{|y|}$. 

During the last  years an increasing interest on the study of the sharp dependence on the $A_p$ constants of rough singular integrals has appeared. In particular it was established in \cite{HRT} that 
\[
\|T_\Omega\|_{L^2(w)}\leq c_n\|\Omega\|_{L^\infty(\mathbb{S}^{n-1})} [w]_{A_2}^2.
\]
Recently the following sparse domination (very recently reproved in \cite{LRough} for the case $\Omega\in L^\infty(\mathbb{S}^{n-1})$) was established in \cite{CACDiPO}.
\stepcounter{equation}
\begin{theorem*}
For all $1<p<\infty$, $f\in L^{p}(\mathbb{R}^{n})$ and $g\in L^{p'}(\mathbb{R}^{n})$,
we have that
\begin{equation}
\tag{\theequation}
\label{eq:SpTO}
\Big|\int_{\mathbb{R}^{n}}T_\Omega(f)gdx\Big|\leq c_n C_Ts'\sup_{\mathcal{S}}\sum_{Q\in\mathcal{S}}\Big(\int_{Q}|f|\Big)\Big(\frac{1}{|Q|}\int_{Q}|g|^{s}\Big)^{1/s},
\end{equation}
where each $\mathcal{S}$ is a sparse family of a dyadic lattice $\mathcal{D}$,
\[
\begin{cases}
1<s<\infty & \text{if } \Omega\in L^{\infty}(\mathbb{S}^{n-1})\\
q' \le s<\infty& \text{if } \Omega\in L^{q,1}\log L(\mathbb{S}^{n-1})
\end{cases}
\] and
\[
C_{T}=\begin{cases}
\|\Omega\|_{L^{\infty}(\mathbb{S}^{n-1}),} & \text{if }\Omega\in L^{\infty}(\mathbb{S}^{n-1})\\
\|\Omega\|_{L^{q,1}\log L(\mathbb{S}^{n-1})} & \text{if } \Omega\in L^{q,1}\log L(\mathbb{S}^{n-1}).\\
\end{cases}
\]
\end{theorem*}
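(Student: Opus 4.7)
My plan is to adapt the sparse domination scheme of \cite{LRough} to produce the bilinear form appearing in \eqref{eq:SpTO}. The central auxiliary object is a grand maximal operator tailored to the bilinear form:
\[
\mathcal{M}_{T_\Omega}^{\sharp}(f,g)(x)=\sup_{Q\ni x}\frac{1}{|Q|}\int_{Q}\bigl|T_\Omega(f\chi_{\mathbb{R}^n\setminus 3Q})\bigr|\,|g|.
\]
The decisive quantitative input is a weak-type endpoint estimate of the form
\[
\bigl|\{x:\mathcal{M}_{T_\Omega}^{\sharp}(f,g)(x)>\lambda\}\bigr|\leq \frac{c_n C_T s'\,\|f\|_{L^1}\|g\|_{L^{s'}}}{\lambda},
\]
valid for any $1<s<\infty$ when $\Omega\in L^\infty(\mathbb{S}^{n-1})$, and for $s\geq q'$ when $\Omega\in L^{q,1}\log L(\mathbb{S}^{n-1})$.

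Granted this bound, the sparse family is built recursively. Fix a cube $Q_0$ containing the supports of $f$ and $g$ (after the usual partitioning reduction) and let $\{Q_j\}$ be the maximal dyadic subcubes of $Q_0$ on which at least one of $\langle|f|\rangle_{Q_j}$, $\langle|g|^s\rangle_{Q_j}^{1/s}$, or $\mathcal{M}_{T_\Omega}^{\sharp}(f\chi_{3Q_0},g\chi_{Q_0})$ exceeds $\Lambda$ times its $Q_0$-counterpart. Choosing $\Lambda$ large enough, the weak bound above yields $\sum_j|Q_j|\leq\tfrac12|Q_0|$, which is the sparseness of the stopping children. Splitting the integral $\int T_\Omega(f\chi_{3Q_0})\,g\chi_{Q_0}$ into its contribution on $Q_0\setminus\bigcup_j Q_j$ and its contribution on each $Q_j$, and further decomposing the source on $Q_j$ into the local piece $f\chi_{3Q_j}$ and its tail $f\chi_{3Q_0\setminus 3Q_j}$, one sees that everything except the local pieces is dominated by $c_n C_T s'\langle|f|\rangle_{3Q_0}\langle|g|^s\rangle_{Q_0}^{1/s}|Q_0|$ via the definition of $\mathcal{M}_{T_\Omega}^{\sharp}$ together with the stopping conditions. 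Iterating the same construction on each $Q_j$ applied to $(f\chi_{3Q_j},g\chi_{Q_j})$ and adding $Q_0$ to $\mathcal{S}$ at each stage produces a sparse family with the required bilinear estimate.

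The hard part is the weak endpoint estimate for $\mathcal{M}_{T_\Omega}^{\sharp}$ with the explicit constant $c_n C_T s'$. For $\Omega\in L^\infty(\mathbb{S}^{n-1})$ this rests on the Christ--Rubio de Francia weak-$(1,1)$ bound for $T_\Omega$, proved through Seeger's dyadic-annular decomposition $T_\Omega=\sum_k T_k$, microlocal smoothing and a $TT^*$ orthogonality argument; this produces the factor $\|\Omega\|_{L^\infty(\mathbb{S}^{n-1})}$ on the $f$-side. The factor $s'$ on the $g$-side is then extracted by combining Kolmogorov's inequality with H\"older applied against $|g|\in L^{s'}$, precisely as in \cite{LRough}. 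In the Lorentz class $L^{q,1}\log L(\mathbb{S}^{n-1})$, the endpoint substitute incurs a loss that forces the restriction $s\geq q'$.
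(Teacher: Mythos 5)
Your overall architecture is the right one: this is exactly the scheme of \cite{LRough} built around the bilinear grand maximal operator $\mathcal{M}_{T}(f,g)$ (the paper itself does not prove the quoted theorem, it cites \cite{CACDiPO} and \cite{LRough}, but Section \ref{Sec:Sparse} runs precisely this scheme for the commutator, cf. Theorem \ref{Thm:Sparse1} and Corollary \ref{Cor:Int}). However, your ``decisive quantitative input'' is misstated. A bound of the form $|\{\mathcal{M}^{\sharp}_{T_\Omega}(f,g)>\lambda\}|\le c_nC_Ts'\|f\|_{L^1}\|g\|_{L^{s'}}/\lambda$ cannot hold: $\mathcal{M}^{\sharp}_{T_\Omega}$ commutes with dilations, so the left-hand side scales like $\delta^{n}$ while the right-hand side scales like $\delta^{n(1+1/s')}$, forcing the inequality to fail as $\delta\to 0$ unless the operator vanishes. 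The correct input is that $\mathcal{M}_{T_\Omega}$ maps $L^{1}\times L^{s}$ into $L^{\nu,\infty}$ with $\frac{1}{\nu}=1+\frac1s$ (note also that the second exponent must be $s$, matching the local $L^{s}$ average of $g$ in the sparse form), with norm $\le c_ns'\|\Omega\|_{L^\infty(\mathbb{S}^{n-1})}$; with this correction your stopping-time construction and iteration are fine and coincide with the proof of Theorem \ref{Thm:Sparse1} (with $b$ removed).

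The genuine gap is in how you propose to prove that endpoint bound. H\"older on a fixed cube puts the exponent $s'>1$ on the factor $T_\Omega(f\chi_{3Q_0\setminus 3Q})$, so what is really needed is that $\mathcal{M}_{s',T_\Omega}$ (local $L^{s'}$ means of the truncations) is of weak type $(1,1)$ with norm $\lesssim s'\|\Omega\|_{L^\infty(\mathbb{S}^{n-1})}$. Kolmogorov's inequality, applied with Seeger's weak $(1,1)$ bound for $T_\Omega$ from \cite{S}, only controls local $L^{t}$ means with $t<1$ and gives nothing for exponents above $1$; so ``Christ--Rubio de Francia/Seeger plus Kolmogorov plus H\"older'' does not produce the required estimate, and this is exactly where the real difficulty of the theorem sits. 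The missing ingredient is the weak-type bound for the grand maximal truncation operator, i.e. Theorem \ref{Thm:logest} ($\|M_{\lambda,T_\Omega}\|_{L^1\to L^{1,\infty}}\le C_n\|\Omega\|_{L^\infty(\mathbb{S}^{n-1})}(1+\log\frac1\lambda)$) combined with Lemma \ref{Lem:equiv} to get $\|\mathcal{M}_{p,T_\Omega}\|_{L^1\to L^{1,\infty}}\le c_np\|\Omega\|_{L^\infty(\mathbb{S}^{n-1})}$; proving it requires redoing the microlocal/$TT^{*}$ analysis for the maximal truncations (the main content of \cite{LRough}), or the comparably delicate estimates of \cite{CACDiPO} on the annular pieces, and cannot be obtained by citing the weak $(1,1)$ of $T_\Omega$ alone. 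The same objection applies, in stronger form, to your one-line treatment of the $L^{q,1}\log L(\mathbb{S}^{n-1})$ case, which your proposal does not actually address.
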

The preceding sparse domination was proved to be a very useful result in \cite{LPRRR}. Among other estimates, the following $A_1-A_\infty$ estimate was established in that paper (see Lemma \ref{Lem:RHI} in Section \ref{Sec:Preliminaries} for the definition of the $A_\infty$ constant)
\[\|T_\Omega\|_{L^p(w)}\leq c_n\|\Omega\|_{L^\infty(\mathbb{S}^{n-1})}[w]_{A_1}^\frac{1}{p}[w]_{A_\infty}^\frac{1}{p'}.\]
The preceding inequality is an improvement of the following estimate established earlier in \cite{PRRR}
\[\|T_\Omega\|_{L^p(w)}\leq c_n\|\Omega\|_{L^\infty(\mathbb{S}^{n-1})}[w]_{A_1}^\frac{1}{p}[w]_{A_\infty}^{1+\frac{1}{p'}}.\]

Now we recall that the commutator of an operator $T$ and a symbol $b$ is defined as
\[[b,T]f(x)=T(bf)(x)-b(x)Tf(x).\]
In the case of $T$ being a Calder\'on-Zygmund operator this operator was introduced by R.R. Coifman, R. Rochberg and G. Weiss in \cite{CRW}. They established that $b\in \BMO$ is a sufficient condition for  $[b,T]$ to be bounded on $L^p$ for every $1<p<\infty$ and also a converse result in terms of the Riesz transforms, namely that the boundedness of $[b,R_j]$ on $L^p$ for some $1<p<\infty$ and for every Riesz transform implies that $b\in\BMO$.

In \cite{PRRR} the following estimate for commutators of rough singular integrals and a symbol $b\in\BMO$ was obtained.
\begin{equation} \label{eq:A1LORR}\|[b,T_\Omega]\|_{L^p(w)}\leq c_n\|\Omega\|_{L^\infty(\mathbb{S}^{n-1})}[w]_{A_1}^\frac{1}{p}[w]_{A_\infty}^{2+\frac{1}{p}}.
\end{equation}
One of the main goals of this paper is to improve the dependence on the $[w]_{A_\infty}$ constant in \eqref{eq:A1LORR}. Our result is the following.

\begin{theorem}\label{Thm:A1}Let $T_{\Omega}$ be a rough homogeneous singular integral with $\Omega\in L^{\infty}(\mathbb{S}^{n-1})$ and let $b\in\BMO$. For every weight $w$ we have that
\begin{equation}\label{eq:Mr}
\|[b,T_\Omega]\|_{L^p(M_r(w))\rightarrow L^p(w)}\leq c_n\|\Omega\|_{L^{\infty}(\mathbb{S}^{n-1})}\|b\|_{\BMO}(p')^3p^{2}(r')^{1+\frac{1}{p'}}
\end{equation}
where $r>1$. Assuming additionally that $w\in{A_\infty}$
\[\|[b,T_\Omega]\|_{L^p(M(w))\rightarrow L^p(w)}\leq c_n\|\Omega\|_{L^{\infty}(\mathbb{S}^{n-1})}\|b\|_{\BMO}(p')^3p^{2}[w]_{A_\infty}^{1+\frac{1}{p'}}\]
and, furthermore, if $w\in A_1$, then 
\[\|[b,T_\Omega] \|_{L^p(w)}\leq c_n\|\Omega\|_{L^{\infty}(\mathbb{S}^{n-1})}\|b\|_{\BMO}(p')^3p^{2}[w]_{A_1}^{\frac{1}{p}}[w]_{A_\infty}^{1+\frac{1}{p'}}.\]
\end{theorem}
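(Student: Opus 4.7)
The plan is to adapt the sparse-domination strategy developed in \cite{LRough} and \cite{LPRRR} for $T_\Omega$ itself to the commutator $[b,T_\Omega]$, and then deduce the three weighted inequalities from a single master estimate by a duality argument combined with the standard $A_\infty$ self-improvement.

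The first and central step is to establish, for every $1<s<\infty$ and suitable $f,g$, a sparse bilinear form bound for the commutator of the shape
\[\Big|\int_{\mathbb{R}^n}[b,T_\Omega]f\cdot g\,dx\Big|\leq c_n\|\Omega\|_{L^\infty(\mathbb{S}^{n-1})}\,s'\,\sup_{\mathcal{S}}\sum_{Q\in\mathcal{S}}\bigl(\mathcal{A}^{1}_Q(f,g)+\mathcal{A}^{2}_Q(f,g)\bigr),\]
where
\[\mathcal{A}^{1}_Q(f,g)=\Big(\tfrac{1}{|Q|}\int_Q|b-b_Q||f|\Big)\Big(\tfrac{1}{|Q|}\int_Q|g|^s\Big)^{1/s}|Q|\]
and $\mathcal{A}^{2}_Q$ is the symmetric expression obtained by exchanging the roles of $f$ and $g$. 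I would produce this bound by running the iterative stopping-time construction of \cite{LRough} on the pointwise identity $[b,T_\Omega]f=T_\Omega\bigl((b-b_{Q_0})f\bigr)-(b-b_{Q_0})T_\Omega f$ inside each stopping cube $Q_0$, so that each level of the sparse construction inserts a BMO-weighted factor into exactly one of the two averages.

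Granted this sparse bound, the $M_r$-estimate \eqref{eq:Mr} follows by dualizing $\|[b,T_\Omega]f\|_{L^p(w)}$ against $g\in L^{p'}(w^{1-p'})$, applying the sparse domination, and using the John--Nirenberg improvement $\tfrac{1}{|Q|}\int_Q|b-b_Q||h|\leq c_n\|b\|_{\BMO}\bigl(\tfrac{1}{|Q|}\int_Q|h|^{u}\bigr)^{1/u}$ for $u$ slightly larger than $1$ on the BMO-weighted averages. Hölder against $w$ and $\sigma=w^{1-p'}$ then reduces the two sparse sums to the weighted sparse-form estimates already used in \cite{LPRRR}, which are controlled by $M$ and $M_r$ acting on $L^p(w)$ and $L^{p'}(\sigma)$. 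Optimising the exponents $s$ and $u$ close to $1$ produces the announced dependence $(p')^3 p^2 (r')^{1+1/p'}$.

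The remaining two estimates are then obtained from \eqref{eq:Mr} by $A_\infty$ self-improvement (Lemma \ref{Lem:RHI} in Section \ref{Sec:Preliminaries}). For $w\in A_\infty$ one chooses $r=r_w:=1+\tfrac{1}{c_n[w]_{A_\infty}}$, so that $M_{r_w}(w)\leq 2\,M(w)$ pointwise and $r_w'\leq c_n[w]_{A_\infty}$; substituting into \eqref{eq:Mr} yields the $L^p(Mw)\to L^p(w)$ bound with the factor $[w]_{A_\infty}^{1+1/p'}$, and the $A_1$ version then follows immediately from the pointwise inequality $M(w)\leq[w]_{A_1}w$. The main obstacle is Step 1: producing a sparse commutator bound that is linear in $s'$, since the BMO weighting introduced at each stopping cube must be propagated through the construction of \cite{LRough} without spoiling the $L^s$ gain on the $g$-side; once this sparse bilinear form is available, the weighted part of the argument is a routine duality-and-optimisation exercise.
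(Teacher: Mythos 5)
Your final step is fine and is exactly what the paper does: take $r=1+\frac{1}{\tau_n[w]_{A_\infty}}$ in \eqref{eq:Mr} so that, by Lemma \ref{Lem:RHI}, $M_r(w)\le 2M(w)$ and $r'\simeq[w]_{A_\infty}$, and then use $Mw\le[w]_{A_1}w$. Your Step 1 is, in spirit, Theorem \ref{Thm:Sparse}; note, however, that the pair of forms you write is not what the Lerner-type stopping construction yields. Since $\mathcal{M}_{T_\Omega}$ is only controlled with an $L^1$ average in the entry placed inside $T_\Omega$ and an $L^s$ average ($s>1$, cost $s'$) in the other entry, the $L^s$ exponent must stay on the $g$-side in \emph{both} terms; what switches sides is the factor $b-b_Q$, giving $\langle f\rangle_{1,Q}\langle(b-b_Q)g\rangle_{s,Q}|Q|$ and $\langle(b-b_Q)f\rangle_{1,Q}\langle g\rangle_{s,Q}|Q|$. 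Your second term, with $\langle f\rangle_{s,Q}$ against a $b$-weighted $L^1$ average of $g$, is not produced by the construction you describe (it would require weak-type information for $\mathcal{M}_{T_\Omega}$ with the roles of the two entries reversed), and it is not needed.

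The genuine gap is the proof of \eqref{eq:Mr} itself, which must hold for an \emph{arbitrary} weight $w$. Dualizing against $g\in L^{p'}(w^{1-p'})$ and ``H\"older against $w$ and $\sigma=w^{1-p'}$'' cannot work here: for a general weight $\sigma$ need not even be locally integrable and there is no control whatsoever on its averages, so the $A_p$-type testing estimates you invoke from \cite{LPRRR} (which are stated for $A_1/A_q$ weights) are not available. The mechanism that makes an arbitrary $w$ tractable is missing from your sketch: one dualizes the equivalent estimate $\big\|[b,T_\Omega]f/M_rw\big\|_{L^{p'}(M_rw)}\lesssim\|f/w\|_{L^{p'}(w)}$ against $h\in L^p(M_rw)$ and runs a Rubio de Francia algorithm based on $S(f)=M\big(f(M_rw)^{1/p}\big)/(M_rw)^{1/p}$, producing a majorant $Rh$ with $[Rh]_{A_\infty}\le c_np'$; only then can the exponent $s$ be chosen in terms of $[Rh]_{A_\infty}$ so that the $L^s$ averages of $Rh$ are absorbed by the reverse H\"older inequality at a cost $\simeq p'$, after which Lemma \ref{4.1HP2Gen} reduces the sparse sums to $\|Mf\|_{L^1(Rh)}$ and $\|M_{L\log L}f\|_{L^1(Rh)}$, and the factors $p(r')^{1/p'}$ and $p^2(r')^{1+1/p'}$ come from \cite[Lemma 3.4]{LOP1} and \cite[Proposition 3.2]{OC}. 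Relatedly, your ``John--Nirenberg improvement'' $\frac{1}{|Q|}\int_Q|b-b_Q||f|\le c_n\|b\|_{\BMO}\langle f\rangle_{u,Q}$ is false with a constant independent of $u$: H\"older plus John--Nirenberg gives the constant $c_nu'$, which blows up as $u\to1$ and cannot be tied to any quantity on the $f$-side (for the dualizing function the paper ties the analogous exponent to $[Rh]_{A_\infty}$); the paper avoids this through the $\exp L$--$L\log L$ duality \eqref{HGenBMOLlogL}, which is precisely what brings in $M_{L\log L}$ and the extra factors. Without these ingredients neither the validity of \eqref{eq:Mr} for general $w$ nor the stated dependence $(p')^3p^2(r')^{1+1/p'}$ is established.
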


Very recently a conjecture left open by K. Moen and A. Lerner in \cite{LM} was solved by K. Li in \cite{Li}. Actually he obtained a more general result.
\begin{theorem*}Let $T$ be a Calder\'on-Zygmund operator or a rough singular integral with $\Omega\in L^\infty(\mathbb{S}^{n-1})$. Then for every $1<q<p<\infty$
\[\|T\|_{L^p(w)}\leq c_{n,p,q} c_T [w]_{A_q^{\frac 1{p}}(A_\infty^{\exp})^{\frac 1{p'}}}\]
where
\[[w]_{A_q^{\frac 1{p}}(A_\infty^{\exp})^{\frac 1{p'}}}=\sup_Q\langle w\rangle_Q\langle w^{\frac{1}{1-q}}\rangle_Q^\frac{q-1}{p}\exp\left(\langle\log w^{-1}\rangle_Q\right)^{\frac 1{p'}}\]
and \[c_T=\begin{cases}\|\Omega\|_{L^\infty(\mathbb{S}^{n-1})} & \text{if } T=T_\Omega \text{ with } \Omega\in L^\infty(\mathbb{S}^{n-1}),\\
c_K+\|T\|_{L^2}+\|\omega\|_{\text{Dini}}& \text{if } T \text{ is an } \omega\text{-Calder\'on-Zygmund operator}. \end{cases}
\]
\end{theorem*}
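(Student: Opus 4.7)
The plan is to deduce the estimate directly from the sparse domination \eqref{eq:SpTO}. By the standard $L^{p}(w)$--$L^{p'}(\sigma)$ duality with $\sigma=w^{1-p'}$, and \eqref{eq:SpTO} applied for some $s>1$ chosen close to $1$, the target inequality reduces to the bilinear sparse estimate
\[
\sum_{Q\in\mathcal{S}}|Q|\,\langle|f|\rangle_Q\,\langle|g|^s\rangle_Q^{1/s}\le c_{n,p,q}\,[w]_{A_q^{1/p}(A_\infty^{\exp})^{1/p'}}\,\|f\|_{L^p(w)}\,\|g\|_{L^{p'}(\sigma)}
\]
for every sparse family $\mathcal{S}$. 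The freedom to take $s>1$ arbitrarily close to $1$, available in the rough $L^\infty$ case of \eqref{eq:SpTO}, is what supplies the self-improvement that sharpens the classical $[w]_{A_p}$ dependence; in the Calder\'on--Zygmund case one takes $s=1$ from the start.

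The next step is a per-cube analysis by a three-way H\"older decomposition. Inserting $w^{1/p}w^{-1/p}$ inside $\langle|f|\rangle_Q$ and $\sigma^{1/p'}\sigma^{-1/p'}$ inside $\langle|g|^s\rangle_Q^{1/s}$, one would distribute the weights so as to produce two factors of the form $\langle|f|^{p}w\rangle_Q^{1/p}$ and $\langle|g|^{p'}\sigma\rangle_Q^{1/p'}$, together with a residual weight factor equal, up to universal constants, to
\[
\langle w\rangle_Q\,\langle w^{1/(1-q)}\rangle_Q^{(q-1)/p}\,\exp\bigl(\langle\log w^{-1}\rangle_Q\bigr)^{1/p'}\le [w]_{A_q^{1/p}(A_\infty^{\exp})^{1/p'}}.
\]
The principal-cube averages $\langle|f|^{p}w\rangle_Q^{1/p}$ and $\langle|g|^{p'}\sigma\rangle_Q^{1/p'}$ are then summed over the sparse family via the Hyt\"onen--P\'erez principal-cube technique and the dyadic Carleson embedding theorem, producing the desired product $\|f\|_{L^p(w)}\|g\|_{L^{p'}(\sigma)}$.

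The main obstacle is to arrange for the geometric mean $\exp(\langle\log w^{-1}\rangle_Q)^{1/p'}$ to appear in place of the larger arithmetic mean $\langle w^{-1}\rangle_Q^{1/p'}$ that a crude H\"older step would yield. To this end I would replace the plain H\"older pairing on the $\sigma$-side by an Orlicz pairing between an $\exp L$ average of $w^{-1}$ (which is precisely the geometric mean) and an $L\log L$ average of $|g|\sigma^{1/p'}$. The $L\log L$ norm is then converted back to the plain $L^{p'}(\sigma)$ norm by the John--Nirenberg inequality for $\log w\in\BMO$, a Coifman--Rochberg consequence of $w\in A_\infty$. The margin $s-1>0$ from the sparse domination is precisely what allows this Orlicz self-improvement to be absorbed without loss, so that the final constant depends only on $n$, $p$, $q$ and $c_T$.
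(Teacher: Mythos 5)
Your overall frame (duality, the sparse bound \eqref{eq:SpTO}, a per-cube H\"older step producing the $A_q$-type factor, then Carleson embedding) is the right skeleton, and it is the one used in \cite{Li} and mirrored in Section \ref{Sec:ThmLM} of this paper (which does not reprove this theorem but runs the same scheme for the commutator). However, the step you yourself identify as the main obstacle is resolved by an incorrect mechanism, and this is a genuine gap. The local $\exp L$ average of $w^{-1}$ is \emph{not} the geometric mean: since $e^t-1\ge t$, one has $\|w^{-1}\|_{\exp L,Q}\ge\langle w^{-1}\rangle_Q\ge\exp\left(\langle\log w^{-1}\rangle_Q\right)$, and the gap between the arithmetic and geometric means is exactly what the one-supremum constant is designed to exploit; worse, $\|w^{-1}\|_{\exp L,Q}$ can be infinite for perfectly admissible weights (take $w(x)=|x|^{a}$ with $0<a<n(q-1)$, so $w\in A_q$, and any $Q$ containing the origin), so the proposed $\exp L$--$L\log L$ pairing cannot produce the factor $\exp\left(\langle\log w^{-1}\rangle_Q\right)^{1/p'}$. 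Moreover, converting the resulting $L\log L$ average back to $L^{p'}(\sigma)$ ``by John--Nirenberg for $\log w\in\BMO$'' necessarily costs a constant depending on $[w]_{A_\infty}$ (equivalently on $\|\log w\|_{\BMO}$), which is not permitted: $[w]_{A_\infty}$ is only controlled by a power of $[w]_{A_q^{\frac 1{p}}(A_\infty^{\exp})^{\frac 1{p'}}}$, so the final bound would no longer be linear in the one-supremum constant. It is precisely an $\exp L$--$L\log L$ step of this type (forced by the symbol $b$, via \eqref{HGenBMOLlogL} and \eqref{eq:desig}) that produces the extra factor $[w]_{A_\infty}$ in Theorem \ref{Thm:LM}; in the present statement no such factor appears, and no $A_\infty$ input may be used.

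The actual mechanism, which your sketch is missing, is elementary algebra plus a Carleson estimate. One dualizes against $hw$ with $\|h\|_{L^{p'}(w)}=1$ (and, in the rough case, first passes to the adjoint $T_\Omega^t$ so that the $s$-average in \eqref{eq:SpTO} falls on $f$ rather than on $hw$), bounds $\langle hw\rangle_Q\le\langle h\rangle^w_{s,Q}\langle w\rangle_Q$ with $1<s<p'$, uses the generalized H\"older inequality with $\overline{B}(t)=t^{\frac{p}{r(q-1)}}$ on the $f$-side to produce $\langle w^{\frac{1}{1-q}}\rangle_Q^{\frac{q-1}{p}}$, and then simply multiplies and divides by $\exp\left(\langle\log w^{-1}\rangle_Q\right)^{1/p'}$, using the identity $\exp\left(\langle\log w^{-1}\rangle_Q\right)\exp\left(\langle\log w\rangle_Q\right)=1$; the geometric mean is inserted for free, not obtained from any Orlicz duality. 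After extracting $[w]_{A_q^{\frac 1{p}}(A_\infty^{\exp})^{\frac 1{p'}}}$ per cube and applying discrete H\"older, the leftover weights $\exp\left(\langle\log w\rangle_Q\right)|Q|$ form a Carleson sequence for the measure $w$ (sparseness plus the $L^1$-boundedness of the geometric maximal operator), so the $h$-side closes by the weighted Carleson embedding theorem (\cite[Theorem 4.5]{HP}) since $s<p'$, while the $f$-side closes by the boundedness of $M_B$ on $L^{p/r}$, which holds for $r$ close to $1$ precisely because $q<p$. Note also that your side assignment (pairing $\int T_\Omega f\cdot g$ with $g\in L^{p'}(\sigma)$ and the $s$-average on $g$) runs into an endpoint obstruction once Jensen is used to discard the geometric mean, which is why the $s>1$ loss is moved onto $f$ through the adjoint; the margin $s-1>0$ is a penalty to be absorbed using $q<p$, not a source of self-improvement, and for Calder\'on--Zygmund operators the case $s=1$ requires no absorption at all.
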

This result can be regarded as an improvement of the linear dependence on the $A_q$ constant established in \cite{LPRRR}, and that, as it was stated there, follows from the linear dependence on the $A_1$ constant by \cite[Corollary 4.3]{Duo-JFA}. Such an improvement stems from the fact that 
\begin{equation}\nonumber
[w]_{A_q^{\frac 1{p}}(A_\infty^{\exp})^{\frac 1{p'}}}\leq c_n [w]_{A_q}.
\end{equation}
In the next Theorem we provide a counterpart of the preceding result for commutators.
\begin{theorem}\label{Thm:LM}Let $T$ be a Calder\'on-Zygmund operator or a rough singular integral with $\Omega\in L^\infty(\mathbb{S}^{n-1})$. Then for every $1<q<p<\infty$
\begin{equation}\label{eq:MixAq}\|[b,T]\|_{L^p(w)}\leq c_{n,p,q} c_T [w]_{A_\infty}[w]_{A_q^{\frac 1{p}}(A_\infty^{\exp})^{\frac 1{p'}}}
\end{equation}
\end{theorem}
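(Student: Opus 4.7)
The plan is to follow the same sparse-domination strategy as in Theorem \ref{Thm:A1}, but to replace the crude weighted estimation of the sparse form by Li's level-set argument from \cite{Li}. The starting point is a bilinear sparse domination for the commutator of the shape
\[
\left| \int [b,T]f \cdot g \, dx \right| \leq c_n c_T \|b\|_{\BMO}\, s'\, \sup_{\mathcal{S}} \sum_{Q \in \mathcal{S}} |Q| \Bigl( \Lambda^{(1)}_{Q,s}(f,g) + \Lambda^{(2)}_{Q,s}(f,g) \Bigr),
\]
valid for every $s>1$, where each $\Lambda^{(i)}_{Q,s}$ is the product of two averages over $Q$ in which the oscillation $|b-b_Q|$ is attached either to the $f$-entry or to the $g$-entry and the companion entry is measured in $L^s(Q,dx/|Q|)$. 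This commutator sparse bound is precisely the one announced in the abstract and used to derive Theorem \ref{Thm:A1}; I would take it as the entry point.

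After inserting this bound into the duality formula $\|[b,T]f\|_{L^p(w)} = \sup_h \int |[b,T]f|\,h$ (with $h$ in the unit ball of $L^{p'}(w^{-p'/p})$) and symmetrizing, the problem reduces to controlling sums like $\sum_{Q \in \mathcal{S}} |Q|\langle f\rangle_{Q,1} \langle (b-b_Q)h\rangle_{Q,s}$. At this point I would reproduce Li's argument almost verbatim: stratify $\mathcal{S}$ by level sets of the stopping-time averages of $f$ and of $hw^{-p'/p}$, use sparseness to collapse the geometric sums to an $L^p$-norm of a maximal function of $f$ and an $L^{p'}$-norm against $w$, and let $s\to 1^+$ through reverse Hölder to replace the $L^s$-averages of $w^{-p'/p}$ with the exponential average $\exp(\langle\log w^{-1}\rangle_Q)$. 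This is the mechanism that manufactures the one-supremum constant $[w]_{A_q^{1/p}(A_\infty^{\exp})^{1/p'}}$ on the right-hand side of \eqref{eq:MixAq}.

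The additional factor $[w]_{A_\infty}$ in \eqref{eq:MixAq} is produced by the treatment of the BMO-oscillation $b-b_Q$ in the sparse form. Applying the weighted John--Nirenberg inequality to $b-b_Q$ against the weight $w$ costs a single power of $[w]_{A_\infty}$ per occurrence and replaces $(b-b_Q)$ by $\|b\|_{\BMO}$ while only perturbing the exponent $s$ by a bounded amount, exactly as in the derivation of Theorem \ref{Thm:A1} and in \cite{PRRR}, \cite{LPRRR}. The main obstacle I foresee is the simultaneous balancing of the two small parameters: Li's reverse Hölder pushes $s>1$ all the way down to $1$ to reach the exponential $A_\infty^{\exp}$ average, while the John--Nirenberg step on $b-b_Q$ requires $s>1$ to stay bounded away from $1$ so that the BMO exponent remains under control. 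The correct trade-off, in analogy with Theorem \ref{Thm:A1}, should be $s = 1 + c/[w]_{A_\infty}$; with this choice both mechanisms run in parallel and \eqref{eq:MixAq} follows by tracking the constants.
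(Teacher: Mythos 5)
There is a genuine gap, concentrated in the rough case. Your plan keeps the sparse $L^s$-average (the one whose constant blows up like $s'$) on the dual entry $(b-b_Q)h$, i.e.\ on the side carrying the weight, and then proposes $s=1+c/[w]_{A_\infty}$. But with that choice the sparse-domination constant itself already contributes $s'\simeq[w]_{A_\infty}$, and you still must pay a second factor of $[w]_{A_\infty}$ when you dispose of $b-b_Q$ against the measure $w\,dx$ (the weighted John--Nirenberg/$\exp L(w)$--$L\log L(w)$ H\"older step, which is unavoidable if you want to end with $\|g\|_{L^{p'}(w)}$). The outcome is $[w]_{A_\infty}^2[w]_{A_q^{1/p}(A_\infty^{\exp})^{1/p'}}$, which is exactly the type of loss the theorem is meant to remove; and there is no way to take $s>1$ bounded away from $1$ on that side, since $\langle w^s\rangle_Q^{1/s}$ is only comparable to $\langle w\rangle_Q$ when $s-1\lesssim 1/[w]_{A_\infty}$. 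The missing idea is the adjoint dualization used in the paper (following Li): write $\int [b,T_\Omega]f\,gw=\int [b,T_\Omega]^t(gw)\,f$ and apply the sparse bound to the commutator $[b,T_\Omega]^t$, so that the $L^u$-average with $u>1$ lands on the \emph{unweighted} entry $f$ (where $u$, and after an unweighted John--Nirenberg step $r=us_1$, depend only on $p,q$), while the weighted entry $(b-b_Q)gw$ keeps a plain $L^1$-average; then only one power of $[w]_{A_\infty}$ appears. For the Calder\'on--Zygmund case no balancing is needed at all, because the $(1,1)$ bilinear sparse form of \cite{LORR} is available.

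A second, related misconception: the one-supremum constant is not produced by ``letting $s\to1^+$ through reverse H\"older to replace $L^s$-averages of $w^{-p'/p}$ by $\exp(\langle\log w^{-1}\rangle_Q)$''. Reverse H\"older goes the wrong way for that, and the paper's proof never forms any average of $\sigma=w^{1-p'}$. Instead one uses Li's Young-function identity $\langle w^{-1/p}\rangle_{\overline{A},Q}=\langle w^{1/(1-q)}\rangle_Q^{(q-1)/p}$ with $\overline{A}(t)=t^{p/(q-1)}$ (or $\overline{B}(t)=t^{p/(r(q-1))}$ in the rough case), inserts the identity $\exp(\langle\log w^{-1}\rangle_Q)^{1/p'}\exp(\langle\log w\rangle_Q)^{1/p'}=1$, absorbs the first factor into $[w]_{A_q^{1/p}(A_\infty^{\exp})^{1/p'}}$, bounds the second by $\langle w\rangle_Q^{1/p'}$ via Jensen, and concludes with the Carleson embedding theorem with respect to $w$ together with the $L^p$ (resp.\ $L^{p/r}$) boundedness of $M_{\overline{A}}$ (resp.\ $M_{\overline{B}}$). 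This last point is also where the hypothesis $q<p$ enters, which your outline never uses; without it the maximal-function step fails and no choice of parameters rescues the argument.
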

We would like to recall the following known estimates. 
\[\begin{split}
\|[b,T]\|_{L^p(w)} &\leq c [w]_{A_q}^2,\\
\|[b,T_\Omega]\|_{L^p(w)} &\leq c [w]_{A_q}^3.
\end{split}\]
The first of them can be derived as a consequence of the quadratic dependence on the $A_1$ constant of $[b,T]$ obtained in \cite{OC} combined with \cite[Corollary 4.3]{Duo-JFA}, while the second one was established in \cite{PRRR}.
In both cases we improve the dependence on the $A_q$ constant since we are able to prove a mixed $A_\infty-A_q^{\frac 1{p}}(A_\infty^{\exp})^{\frac 1{p'}}$ bound and \[\max\{[w]_{A_\infty}, [w]_{A_q^{\frac 1{p}}(A_\infty^{\exp})^{\frac 1{p'}}}\}\leq c_n [w]_{A_q}.\]

In order to establish Theorems \ref{Thm:LM} and \ref{Thm:A1} we will rely upon a sui\-ta\-ble sparse do\-mi\-na\-tion result for $[b,T_\Omega]$. This result will be a natural bilinear counterpart of the result obtained in \cite{LORR} for $[b,T]$ with $T$ a Calder\'on-Zygmund operator and also of \eqref{eq:SpTO}. The precise statement is the fo\-llowing.

\begin{theorem}\label{Thm:Sparse} Let $T_{\Omega}$ be a rough homogeneous singular integral with $\Omega\in L^{\infty}(\mathbb{S}^{n-1})$.
Then, for every compactly supported $f,g\in \mathcal{C}^\infty({\mathbb R}^n)$ every $b\in \BMO$ and $1<p<\infty$, there exist $3^n$ dyadic lattices $\mathcal{D}_j$ and $3^n$ sparse families ${\mathcal S}_j\subset\mathcal{D}_j$ such that
\begin{equation}
|\langle [b,T_\Omega]f,g \rangle|\le C_np'\|\Omega\|_{L^{\infty}(\mathbb{S}^{n-1})}\sum_{j=1}^\infty\left(\mathcal{T}_{\mathcal{S}_j,1,p}(b,f,g)+\mathcal{T}^*_{\mathcal{S}_j,1,p}(b,f,g)\right) 
\end{equation}
where
\[
\begin{split}
\mathcal{T}_{\mathcal{S}_j,r,s}(b,f,g)&=\sum_{Q\in {\mathcal S}_j}\langle f\rangle_{r,Q}\langle (b-b_Q)g\rangle_{s,Q}|Q|\\
\mathcal{T}^*_{\mathcal{S}_j,r,s}(b,f,g)&=\sum_{Q\in {\mathcal S}_j}\langle(b-b_Q)f\rangle_{r,Q}\langle g\rangle_{s,Q}|Q|
\end{split}
\]
\end{theorem}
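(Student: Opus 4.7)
The plan is to combine the grand-maximal-truncation sparse-domination scheme of \cite{LRough} — which underlies the proof of \eqref{eq:SpTO} — with the commutator stopping-time trick of \cite{LORR}. The key identity, valid for every cube $Q$ with pivot $b_Q=\frac{1}{|Q|}\int_Q b$, is
\[
[b, T_\Omega] h = T_\Omega\bigl((b - b_Q) h\bigr) - (b - b_Q)\, T_\Omega h,
\]
obtained from the fact that $[b_Q,T_\Omega]=0$. Pairing with a test function $g$ and rewriting the second term as $\int T_\Omega h\cdot (b-b_Q) g$, a sparse bound with inner exponents $(1,p)$ and constant of order $p'\|\Omega\|_{L^\infty}$ for each of the two pieces then produces precisely the sparse forms $\mathcal T_{\mathcal S,1,p}$ and $\mathcal T^*_{\mathcal S,1,p}$ in the statement.

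\textbf{Recursive sparse construction.} Fix a cube $Q_0$ containing the supports of $f$ and $g$. As in \cite{LRough}, introduce an appropriate grand maximal truncation operator $\mathcal M_{T_\Omega}$ adapted to $T_\Omega$. Perform one Calder\'on-Zygmund-type stopping step on $Q_0$: let $\{Q_j\}$ be the maximal dyadic subcubes of $Q_0$ on which the Hardy-Littlewood maximal function applied to any of
\[
f\chi_{Q_0},\ g\chi_{Q_0},\ (b-b_{Q_0}) f\chi_{Q_0},\ (b-b_{Q_0}) g\chi_{Q_0},
\]
or the appropriate $\mathcal M_{T_\Omega}$-type truncations thereof, exceeds a large multiple of the corresponding $L^1$ (respectively $L^p$) average over $Q_0$. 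With the threshold chosen large enough, the weak-type bounds for $M$ and $\mathcal M_{T_\Omega}$ yield $\sum_j |Q_j|\le\tfrac12 |Q_0|$. Iterating inside each $Q_j$, with the new pivot $b_{Q_j}$ in the identity above, produces a sparse collection $\mathcal S\subset \mathcal D(Q_0)$; the standard three-lattice trick reduces the case of arbitrary $f,g\in \mathcal C_c^\infty(\mathbb R^n)$ to $3^n$ such families, giving the $3^n$ sparse collections in the statement.

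\textbf{Local estimate and main obstacle.} The crucial step is the local bound, for each principal cube $Q$ in the recursion,
\[
\Bigl|\int_{E_Q} [b,T_\Omega](f\chi_Q)\, g\, dx\Bigr| \le C_n\, p'\, \|\Omega\|_{L^\infty(\mathbb S^{n-1})}\bigl(\langle f\rangle_{1,Q}\langle (b-b_Q) g\rangle_{p,Q} + \langle (b-b_Q) f\rangle_{1,Q}\langle g\rangle_{p,Q}\bigr)\,|Q|,
\]
where $E_Q := Q\setminus\bigcup_j Q_j$. Inserting the splitting from the first paragraph and applying H\"older with exponents $(p',p)$ reduces the estimate to two instances of the $L^{p'}$-averaged inequality
\[
\Bigl(\frac{1}{|Q|}\int_{E_Q} |T_\Omega(h\chi_Q)|^{p'}\, dx\Bigr)^{1/p'} \le C_n\, p'\, \|\Omega\|_{L^\infty(\mathbb S^{n-1})}\, \langle h\rangle_{1,Q},
\]
applied with $h=f$ and with $h=(b-b_Q)f$. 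Proving precisely this last inequality is the main obstacle: because $\Omega$ is merely bounded and lacks H\"older regularity, the pointwise Calder\'on-Zygmund decay used in the CZO-commutator argument of \cite{LORR} is unavailable, and must be replaced by the $L^{p'}$-averaged grand-maximal-truncation argument of \cite{LRough}, with the $p'$ growth arising from a Kolmogorov-type passage through the weak $(1,1)$ endpoint of $T_\Omega$ combined with its $L^{p'}$ boundedness. Once this local estimate is established, summing over the recursion and across the $3^n$ shifted dyadic systems yields the sparse bound in the stated form.
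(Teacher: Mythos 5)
Your overall architecture --- Lerner's stopping-time/truncation scheme from \cite{LRough} combined with the pivot identity $[b,T_\Omega]h=T_\Omega((b-b_Q)h)-(b-b_Q)T_\Omega h$ --- is the same route the paper takes (Theorem \ref{Thm:Sparse1} and Corollary \ref{Cor:Int}, applied with $q=r=1$, $s=p$). However, the step you single out as the ``main obstacle'' is both misstated and incorrectly justified, and this is a genuine gap. The inequality $\bigl(\frac{1}{|Q|}\int_{E_Q}|T_\Omega(h\chi_Q)|^{p'}\,dx\bigr)^{1/p'}\le C_n p'\|\Omega\|_{L^\infty(\mathbb{S}^{n-1})}\langle h\rangle_{1,Q}$ cannot be obtained by a ``Kolmogorov-type passage through the weak $(1,1)$ endpoint combined with $L^{p'}$ boundedness'': Kolmogorov's inequality converts a weak $(1,1)$ bound into $L^\delta$-average control only for $\delta<1$, and the $L^{p'}$ boundedness of $T_\Omega$ puts an $L^{p'}$ average of $h$ (not an $L^1$ average) on the right-hand side, so no combination of these two facts yields the claimed estimate. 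Moreover, with the stopping conditions you describe (Hardy--Littlewood maximal functions and $\mathcal M_{T_\Omega}$-type truncations of the four functions), nothing controls the \emph{untruncated} local piece $T_\Omega(h\chi_Q)$ on $E_Q$, since the kernel has no smoothness. The paper sidesteps this entirely: the exceptional set includes the level sets of $|T_\Omega(f\chi_{3Q_0})|$ and $|T_\Omega((b-b_{R_{Q_0}})f\chi_{3Q_0})|$ themselves (the sets $E_1$ and $E_3$), so off the exceptional set one has a \emph{pointwise} bound by $A_1\langle f\rangle_{1,3Q_0}$, resp. $A_3\langle (b-b_{R_{Q_0}})f\rangle_{1,3Q_0}$, using only Seeger's weak $(1,1)$ bound for $T_\Omega$ from \cite{S}; no factor $p'$ arises there, and the $L^p$ average of $(b-b_{R_{Q_0}})g$, resp. $g$, then enters by plain H\"older on $Q_0$.

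The actual source of the $p'$ in the theorem is different from what you propose: it is the weak $(1,1)$ bound for the grand maximal truncation operator with interior $L^{p'}$ averages, $\|\mathcal M_{p',T_\Omega}\|_{L^1\to L^{1,\infty}}\le c_n p'\|\Omega\|_{L^\infty(\mathbb{S}^{n-1})}$, which is what controls the off-diagonal terms $\int_{P_j}|T_\Omega(\,\cdot\,\chi_{3Q_0\setminus 3P_j})|\,|\cdot|$ that must be absorbed before recursing inside the stopping cubes (this is exactly the role of the sets $E_2$, $E_4$ and of the bilinear operator $\mathcal M_T$ in Theorem \ref{Thm:Sparse1}). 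That bound is a substantial theorem: in the paper it is Theorem \ref{Thm:logest} (Lerner's $\log\frac{1}{\lambda}$ weak-type estimate for $M_{\lambda,T_\Omega}$) combined with Lemma \ref{Lem:equiv} with $\gamma=1$. Your proposal neither cites nor proves it, and the interpolation-style argument you sketch cannot substitute for it. To repair the proof you should: include the level sets of $T_\Omega$ applied to the pivoted local pieces in the exceptional set (so the good-set term needs only the weak $(1,1)$ of $T_\Omega$); keep the truncations at $3Q$ rather than $Q$, so that they match the definition of $\mathcal M_{p',T_\Omega}$ and so that the $3^n$-lattice lemma can convert averages over $3Q$ into averages over dyadic cubes $R_Q$; and invoke the quantitative weak $(1,1)$ bound for $\mathcal M_{p',T_\Omega}$ --- at which point you are reproducing the paper's proof via Theorem \ref{Thm:Sparse1} and Corollary \ref{Cor:Int}.
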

\begin{remark}
In the preceding Theorem and throughout the rest of this work $\langle h \rangle^w_{\alpha,Q}=\left(\frac{1}{w(Q)}\int_{Q}|h|^\alpha wdx\right)^\frac{1}{\alpha}$. We may drop $\alpha$ in the case $\alpha=1$ and $w$ when we consider the Lebesgue measure.
\end{remark}
The rest of the paper is organized as follows. We devote Section \ref{Sec:Preliminaries} to gather some results and definitions that will be needed to prove the main theorems. Section \ref{Sec:Sparse} is devoted to the proof of Theorem \ref{Thm:Sparse}. In Section \ref{sec:PThmA1} we prove Theorem \ref{Thm:A1}. We end this work providing a proof of Theorem \ref{Thm:LM} in Section \ref{Sec:ThmLM}.

\section{Preliminaries}\label{Sec:Preliminaries}
In this section we gather some definitions and results that will be necessary for the proofs of the main theorems. 

We start borrowing some definitions and a basic lemma from \cite{LN}.
Given a cube $Q_0\subset {\mathbb R}^n$, we denote by ${\mathcal D}(Q_0)$ the family of all dyadic cubes with respect to $Q_0$, namely, the cubes obtained subdividing repeatedly $Q_0$ and each of its descendants into $2^n$ subcubes of the same sidelength.

We say that  $\mathcal{D}$ is a dyadic lattice if it is a collection of cubes of  ${\mathbb R}^n$ such that:
\begin{enumerate}
\item If $Q\in\mathcal{D}$, then $\mathcal{D}(Q_0)\subset\mathcal{D}$.
\item For every pair of cubes $Q',Q''\in \mathcal{D}$ there exists a common ancestor, namely, we can find $Q\in\mathcal{D}$ such that $Q',Q''\in {\mathcal D}(Q)$.
\item For every compact set $K\subset {\mathbb R}^n$, there exists a cube $Q\in \mathcal{D}$ such that $K\subset Q$.
\end{enumerate}

\begin{lemma}[$3^n$ dyadic lattices lemma]\label{Lem:3nDL}
Given a dyadic lattice $\mathcal{D}$, there exist $3^n$ dyadic lattices $\mathcal{D}_1, \dots,\mathcal{D}_{3^n}$ such that
\[\{3Q: Q\in\mathcal{D}\}=\cup_{j=1}^{3^n}\mathcal{D}_{j}\]
and for each cube $Q\in \mathcal{D}$ and $j=1,\dots,3^n$, there exists a unique cube $R\in \mathcal{D}_j$ with
sidelength $l(R)=3l(Q)$ containing $Q$.
\end{lemma}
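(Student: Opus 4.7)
The plan is to construct the $3^n$ dyadic lattices $\mathcal{D}_j$ explicitly, by assembling at each dyadic scale one of the $3^n$ possible translations of the triple-scale tiling of $\mathbb{R}^n$, and then verifying that such translations can be chosen compatibly across scales.

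First, I would reduce to the case where $\mathcal{D}$ coincides, after a global translation, with the standard dyadic lattice on $\mathbb{R}^n$. Properties (2) and (3) in the definition of a dyadic lattice force a common dyadic grid across all scales: any two cubes of the same sidelength sit inside a common ancestor by (2) and hence align to a single grid, and (3) combined with (1) then implies that every cube of that grid actually belongs to $\mathcal{D}$. In this normalization each $Q\in\mathcal{D}$ of sidelength $\ell = 2^{-k}$ has the form $Q = \ell \bar{m} + [0,\ell]^n$ with $\bar{m} \in \mathbb{Z}^n$, so $3Q = \ell(\bar{m} - \bar{1}) + [0,3\ell]^n$. The lower corners of the cubes $\{3Q:l(Q)=\ell\}$ fill $\ell\mathbb{Z}^n$, and for each $\omega\in\{0,1,2\}^n$ the subcollection whose corner lies in $3\ell\mathbb{Z}^n+\ell\omega$ is a tiling of $\mathbb{R}^n$ by cubes of sidelength $3\ell$. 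This produces $3^n$ distinct tilings at each single scale.

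The main technical obstacle is gluing these single-scale tilings into honest dyadic lattices. Writing $\omega_k \in \{0,1,2\}^n$ for the shift used at scale $3\cdot 2^{-k}$, I would demand that every tile at scale $3\cdot 2^{-k}$ nest uniquely inside a tile at scale $3\cdot 2^{-(k-1)}$. A direct coordinate computation in $\mathbb{R}^1$ reduces this to the requirement $\omega_{k+1}\equiv 2\omega_k \pmod 6$ or $\omega_{k+1}\equiv 2\omega_k+3 \pmod 6$, which together with $\omega_k\in\{0,1,2\}$ admits exactly three bi-infinite solutions: the constant sequence $0$, and the two sequences alternating between $1$ and $2$. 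Tensoring coordinates yields $3^n$ bi-infinite shift sequences, and hence $3^n$ candidate families of cubes $\mathcal{D}_1,\dots,\mathcal{D}_{3^n}$ in $\mathbb{R}^n$; it is then straightforward to check that each $\mathcal{D}_j$ is closed under taking dyadic descendants (bisecting a tile of the $k$th scale produces tiles of the $(k+1)$st scale of the same sequence), admits common ancestors, and exhausts all compact sets, so each is a dyadic lattice.

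The two remaining properties of the lemma are then essentially automatic. At each scale the $3^n$ shifts are all realized among $\mathcal{D}_1,\dots,\mathcal{D}_{3^n}$, so $\bigcup_j \mathcal{D}_j = \{3Q : Q\in\mathcal{D}\}$. Given $Q\in\mathcal{D}$ of sidelength $\ell$, each $\mathcal{D}_j$ tiles $\mathbb{R}^n$ at scale $3\ell$, so the unique cube $R\in\mathcal{D}_j$ with $l(R)=3\ell$ containing $Q$ is simply the tile of that tiling which covers $Q$.
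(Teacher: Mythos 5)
The paper itself offers no proof of this lemma (it is quoted from \cite{LN}), so your attempt stands on its own, and it has a genuine gap at the very first step. The normalization ``after a global translation $\mathcal{D}$ is the standard dyadic lattice'' is not merely unjustified but impossible. What you correctly deduce from the axioms is that, at each fixed sidelength $2^{-k}$, the cubes of $\mathcal{D}$ form a full grid and that these grids are nested across scales. But it does not follow that the offsets can be removed by one translation: that would require a single point which is a grid vertex at every scale, and such a point can never exist for a dyadic lattice, since every cube of $\mathcal{D}$ would then have that point on its boundary, so two cubes abutting it from opposite sides would violate the common-ancestor axiom (2). Equivalently, the standard dyadic grid is itself not a dyadic lattice in the sense used here (e.g.\ $[-1,0]$ and $[0,1]$ have no common dyadic ancestor), so no dyadic lattice is a translate of it; genuine dyadic lattices are exactly those nested grid families whose vertices near any fixed bounded set escape to infinity as the scale grows, and this drift at large scales is precisely what your reduction erases.

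The loss is not cosmetic: it is what you need in the step you call ``straightforward''. In your normalized model the three one-dimensional threads you construct are not dyadic lattices. Take the constant-shift-$0$ thread, i.e.\ the intervals $[3\cdot 2^{-k}j,\,3\cdot 2^{-k}(j+1)]$: the point $0$ is an endpoint at every scale, so $[-3,0]$ and $[0,3]$ have no common ancestor and the compact set $[-1,1]$ is contained in no member; axioms (2) and (3) fail. The correct argument keeps the original lattice throughout: at each scale the tripled cubes $3Q$ split, per coordinate, into the three cosets of the scale-$2^{-k}$ grid modulo $3\cdot 2^{-k}$; the refinement relation between cosets at consecutive scales is a bijection, so there are exactly three coherent bi-infinite threads per coordinate and $3^n$ in all (your mod-$6$ recursion is the standard-grid shadow of this, and the explicit ``constant $0$ / alternating $1,2$'' description is tied to coordinates that a general $\mathcal{D}$ does not have); and then axioms (2) and (3) for each thread are verified using axioms (2)--(3) of $\mathcal{D}$, which guarantee that around any bounded set all grid hyperplanes, hence all thread-tile boundaries, disappear at sufficiently large scales. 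So the combinatorial core of your construction is sound, but you must drop the translation-to-the-standard-grid reduction, define the three classes per scale intrinsically as cosets inside the actual grids of $\mathcal{D}$, and prove properties (2)--(3) for the $\mathcal{D}_j$ from the corresponding properties of $\mathcal{D}$; as written, that verification is actually false in the model you reduced to.
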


Now we gather some results that will be needed to prove  Theorem \ref{Thm:A1}. The first of them is the so called Reverse H\"older inequality that was proved in \cite{HP} (see also \cite{HPR}).
\begin{lemma}\label{Lem:RHI}
For every $w\in A_\infty$, namely for every weight such that 
\[[w]_{A_\infty}=\sup_Q\frac{1}{w(Q)}\int_Q M(w\chi_Q)<\infty,\]
the following estimate holds
\[\left(\frac{1}{|Q|}\int_Qw^{r_w}\right)^\frac{1}{r_w}\leq 2 \left(\frac{1}{|Q|}\int_Qw\right)\]
where $r_w=1+\frac{1}{\tau_n[w]_{A_\infty}}$ and $\tau_n>0$ is a constant independent of $w$. 
\end{lemma}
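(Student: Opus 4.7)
My plan is to follow the sharp Reverse H\"older argument of Hyt\"onen and P\'erez \cite{HP,HPR}, deducing the inequality from the Fujii--Wilson definition of $[w]_{A_\infty}$ through a Calder\'on--Zygmund stopping-time decomposition combined with layer-cake integration.

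I would fix a cube $Q_0$, put $a=\langle w\rangle_{Q_0}$, and reduce matters to proving
\[
\int_{Q_0} w^{1+\delta}\,dx \le 2^{1+\delta}\, a^{\delta}\, w(Q_0), \qquad \delta = \frac{1}{\tau_n[w]_{A_\infty}},
\]
for a dimensional constant $\tau_n$ to be optimized. Writing $M_0 := M_{Q_0}^{d}(w\chi_{Q_0})$ for the dyadic maximal function on $Q_0$ and $\Omega_t=\{x\in Q_0:M_0(x)>t\}$, the Calder\'on--Zygmund decomposition at any level $t>a$ produces a disjoint family $\{Q_j^t\}$ of maximal dyadic subcubes of $Q_0$ with $t<\langle w\rangle_{Q_j^t}\le 2^n t$, whose union is $\Omega_t$ up to a null set. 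This gives the two basic bounds $\{w>t\}\cap Q_0\subset\Omega_t$ a.e.\ and
\[
w(\Omega_t) \le 2^n t\,|\Omega_t|,
\]
so that the layer-cake formula reduces the problem to controlling the size of $|\Omega_t|$ and $w(\Omega_t)$.

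Next, I would apply the Fujii--Wilson hypothesis $\int_{Q}M(w\chi_{Q})\le [w]_{A_\infty}w(Q)$ inside each stopping cube $Q_j^t$: since the averages of $w$ over all dyadic ancestors of $Q_j^t$ in $Q_0$ are $\le t$, the map $M_0|_{Q_j^t}$ above level $t$ coincides with $M_{Q_j^t}^{d}(w\chi_{Q_j^t})$, and Chebyshev's inequality produces
\[
|\Omega_{t'} \cap Q_j^t|\le \frac{[w]_{A_\infty}\,w(Q_j^t)}{t'}\le \frac{2^n[w]_{A_\infty}\,t}{t'}\,|Q_j^t|,
\]
hence $|\Omega_{t'}|\le (2^n[w]_{A_\infty}t/t')\,|\Omega_t|$. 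Iterating this along the geometric sequence $t_k=a\,(2^{n+1}[w]_{A_\infty})^k$ forces $|\Omega_{t_k}|\le 2^{-k}|Q_0|$. A parallel argument, combining the Fujii--Wilson inequality on each $Q_j^{t_k}$ with the CZ bound on the $w$-average of each stopping cube, promotes this to a geometric decay of the weighted measure, $w(\Omega_{t_k})\le \beta^{k}w(Q_0)$ with $\beta<1$ of the order $1-c/[w]_{A_\infty}$.

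Finally I would plug these estimates into the layer-cake decomposition
\[
\int_{Q_0} w^{1+\delta}\,dx \le a^{\delta}w(Q_0)+ (1+\delta)\sum_{k\ge 0} t_{k+1}^{\delta}\, w\bigl(\Omega_{t_k}\setminus \Omega_{t_{k+1}}\bigr)
\]
and sum the resulting geometric series. The principal obstacle is the delicate calibration of $\delta$ against $[w]_{A_\infty}$: the growth factor $t_k^{\delta}\sim ([w]_{A_\infty})^{\delta k}$ on each dyadic shell must be dominated by the decay $\beta^k$ of the weighted measure, which forces $\delta\lesssim 1/[w]_{A_\infty}$. The sharp choice $\delta=1/(\tau_n[w]_{A_\infty})$ with $\tau_n=2^{n+c}$ for a sufficiently large dimensional $c$ (as in \cite{HP}) achieves convergence of the series to at most $a^{\delta}w(Q_0)$, yielding the stated inequality with constant $2$.
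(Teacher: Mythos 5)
Your frame (reduction to $\int_{Q_0}w^{1+\delta}\le 2^{1+\delta}\langle w\rangle_{Q_0}^{\delta}w(Q_0)$, the local dyadic maximal function, the stopping cubes with $t<\langle w\rangle_{Q_j^t}\le 2^n t$, and the layer-cake decomposition) is the right one, and note that the paper itself offers no proof of this lemma but cites \cite{HP,HPR}, whose argument you are trying to reproduce. The proof breaks down, however, exactly at the steps that are supposed to carry the $A_\infty$ information. First, your Chebyshev step $|\Omega_{t'}\cap Q_j^t|\le [w]_{A_\infty}w(Q_j^t)/t'$ is weaker than the trivial weak $(1,1)$ bound $|\Omega_{t'}\cap Q_j^t|\le w(Q_j^t)/t'$ for the local dyadic maximal operator, so the Lebesgue-measure decay $|\Omega_{t_k}|\le 2^{-k}|Q_0|$ you extract holds for every weight and encodes no $A_\infty$ information at all. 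Second, and decisively, the ``parallel argument'' that is supposed to upgrade this to $w(\Omega_{t_k})\le\beta^k w(Q_0)$ with $\beta=1-c/[w]_{A_\infty}$ is never given and does not follow from the ingredients you name: combining the Calder\'on--Zygmund average bound on the level-$t_{k+1}$ cubes with the Fujii--Wilson integral (or with the weak $(1,1)$ bound) on each $Q_j^{t_k}$ only yields $w(\Omega_{t_{k+1}}\cap Q_j^{t_k})\le 2^n t_{k+1}|\Omega_{t_{k+1}}\cap Q_j^{t_k}|\le 2^n w(Q_j^{t_k})$, which is vacuous. A genuine weighted decay of the level sets is essentially as deep as the reverse H\"older inequality itself, and it cannot be fast: for a weight equal to a huge constant $N$ on a tiny cube $E\subset Q_0$ and to a suitably small constant elsewhere (so that $[w]_{A_\infty}\simeq\log(|Q_0|/|E|)$), the set $E$ carries half of $w(Q_0)$ and lies inside $\Omega_{t_k}$ for all $k\lesssim [w]_{A_\infty}/\log(2^{n+1}[w]_{A_\infty})$, so $w(\Omega_{t_k})\ge\tfrac12 w(Q_0)$ throughout that range; thus nothing stronger than the very slow rate you postulate can be true.

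Moreover, even granting $w(\Omega_{t_k})\le(1-c/[w]_{A_\infty})^k w(Q_0)$, the final calibration does not close. With $t_{k+1}/t_k=2^{n+1}[w]_{A_\infty}$ the series $\sum_k t_{k+1}^{\delta}w(\Omega_{t_k})$ is geometric with ratio $(2^{n+1}[w]_{A_\infty})^{\delta}(1-c/[w]_{A_\infty})$, which is below $1$ only when $\delta\log(2^{n+1}[w]_{A_\infty})\lesssim 1/[w]_{A_\infty}$, that is $\delta\lesssim\bigl([w]_{A_\infty}\log(e+[w]_{A_\infty})\bigr)^{-1}$. Hence no dimensional $\tau_n$ in $\delta=1/(\tau_n[w]_{A_\infty})$ can work for large $[w]_{A_\infty}$, and your scheme would at best give the non-sharp exponent $r_w-1\simeq\bigl([w]_{A_\infty}\log(e+[w]_{A_\infty})\bigr)^{-1}$, not the statement of the lemma. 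The mechanism in \cite{HP,HPR} is different: the Fujii--Wilson hypothesis is used as the $L^1$ (Carleson-type) bound $\int_{Q}M^d_{Q}(w\chi_{Q})\,dx\le[w]_{A_\infty}w(Q)$ on every stopping cube, which yields a packing estimate of the form $\sum_k w(\Omega_{t_k})\le c_n[w]_{A_\infty}w(Q_0)$ for levels spaced by the absolute ratio $2^{n+1}$, equivalently it allows one to absorb the tail of the layer-cake integral for $\int_{Q_0}(M^d_{Q_0}w)^{\epsilon}w\,dx$ when $\epsilon\simeq 1/[w]_{A_\infty}$. It is this $L^1$ use of $[w]_{A_\infty}$, not a pointwise Chebyshev bound nor a geometric decay of $w(\Omega_{t_k})$, that produces the sharp exponent $1/(\tau_n[w]_{A_\infty})$ and the constant $2$; to repair your proof you would need to replace the claimed decay by this packing/absorption step.
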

At this point we would like to recall that if $w\in A_p\subseteq A_\infty$ then $[w]_{A_\infty}\leq c_n[w]_{A_p}$. This fact makes mixed $A_\infty-A_p$ bounds interesting, since they provide a sharper dependence than $A_p$ bounds.
We also need  to borrow the following lemma from \cite{PRRR}.
\begin{lemma}\label{4.1HP2Gen}
Let $w\in A_\infty$. Let $\mathcal{D}$ be a dyadic lattice and $\mathcal{S}\subset\mathcal{D}$ be an $\eta$-sparse family. Let $\Psi$ be a Young function. Given a measurable function $f$  on $\mathbb{R}^n$ define
\[
\mathcal{B}_\mathcal{S}f(x):=\sum_{Q\in\mathcal{S}}\|f\|_{\Psi(L),Q}\chi_Q(x).
\] 
Then we have 
\[
\|\mathcal{B}_\mathcal{S}f\|_{L^1(w)}\leq \frac{4}{\eta}[w]_{A_\infty}\|M_{\Psi(L)}f\|_{L^1(w)}.
\]
\end{lemma}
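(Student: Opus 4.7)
The plan is to unfold the $L^1(w)$ norm as a sum over $\mathcal{S}$, run a principal-cube stopping time adapted to the Luxemburg averages $\|f\|_{\Psi(L),\cdot}$, and extract the factor $[w]_{A_\infty}$ through the Fujii--Wilson inequality $\int_Q M(w\chi_Q)\,dx\leq[w]_{A_\infty}w(Q)$. First I would write
\[
\|\mathcal{B}_{\mathcal{S}}f\|_{L^1(w)}=\sum_{Q\in\mathcal{S}}\|f\|_{\Psi(L),Q}w(Q),
\]
and construct a subfamily $\mathcal{F}\subseteq\mathcal{S}$ of principal cubes: the maximal elements of $\mathcal{S}$ lie in $\mathcal{F}$, and recursively a cube $F'\in\mathcal{S}$ with $F'\subsetneq F\in\mathcal{F}$ is declared a child of $F$ when it is maximal subject to $\|f\|_{\Psi(L),F'}>2\|f\|_{\Psi(L),F}$. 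Writing $\pi(Q)$ for the smallest element of $\mathcal{F}$ containing $Q\in\mathcal{S}$, the stopping rule forces $\|f\|_{\Psi(L),Q}\leq 2\|f\|_{\Psi(L),\pi(Q)}$, and grouping by principal ancestor gives
\[
\sum_{Q\in\mathcal{S}}\|f\|_{\Psi(L),Q}w(Q)\leq 2\sum_{F\in\mathcal{F}}\|f\|_{\Psi(L),F}\sum_{Q:\pi(Q)=F}w(Q).
\]

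The argument then hinges on two estimates. For the inner sum, I would combine the sparseness of $\mathcal{S}$ (disjoint $E_Q\subseteq Q$ with $|E_Q|\geq\eta|Q|$), the pointwise bound $w(Q)/|Q|\leq M(w\chi_F)(x)$ for $x\in Q\subseteq F$, and the $A_\infty$ hypothesis to obtain
\[
\sum_{Q:\pi(Q)=F}w(Q)\leq\frac{1}{\eta}\sum_{Q:\pi(Q)=F}\int_{E_Q}M(w\chi_F)\,dx\leq\frac{1}{\eta}\int_FM(w\chi_F)\,dx\leq\frac{[w]_{A_\infty}}{\eta}w(F).
\]
For the outer sum I would use that the principal cubes through a fixed $x$ form a nested chain along which $\|f\|_{\Psi(L),\cdot}$ at least doubles, so that a geometric series yields the pointwise bound
\[
\sum_{F\in\mathcal{F}}\|f\|_{\Psi(L),F}\chi_F(x)\leq 2\sup_{F\ni x}\|f\|_{\Psi(L),F}\leq 2M_{\Psi(L)}f(x).
\]
Integrating against $w$ produces $\sum_F\|f\|_{\Psi(L),F}w(F)\leq 2\|M_{\Psi(L)}f\|_{L^1(w)}$, and chaining the three estimates yields the promised constant $4[w]_{A_\infty}/\eta$.

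The step I expect to be the main obstacle is the sparse-plus-$A_\infty$ control of $\sum_{\pi(Q)=F}w(Q)$, since this is the only place where the $A_\infty$ assumption actually enters, and it is what lets one replace the many overlapping averages $w(Q)/|Q|$ by a single integral $\int_F M(w\chi_F)\,dx$. The remainder of the proof is purely structural; in particular, it avoids any weak-type property of $M_{\Psi(L)}$, which is fortunate since $\Psi$ is an arbitrary Young function and such bounds are unavailable in that generality.
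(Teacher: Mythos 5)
Your proof is correct and yields exactly the stated constant $\frac{4}{\eta}[w]_{A_\infty}$, but it does not follow the source: the paper does not prove this lemma at all, importing it from \cite{PRRR}, and the argument there is a level-set decomposition rather than a stopping time. In \cite{PRRR} one sets $\mathcal{S}_k=\{Q\in\mathcal{S}:2^k<\|f\|_{\Psi(L),Q}\le 2^{k+1}\}$, passes to the maximal cubes $Q^k_j$ of $\mathcal{S}_k$, bounds $\sum_{Q\in\mathcal{S}_k,\,Q\subseteq Q^k_j}w(Q)\le\frac{1}{\eta}\int_{Q^k_j}M(w\chi_{Q^k_j})\le\frac{[w]_{A_\infty}}{\eta}w(Q^k_j)$ --- precisely the sparse-plus-Fujii--Wilson step you single out as the crux --- and then sums in $k$ using $\bigcup_j Q^k_j\subseteq\{M_{\Psi(L)}f>2^k\}$ together with $\sum_k 2^{k-1}w(\{M_{\Psi(L)}f>2^k\})\le\|M_{\Psi(L)}f\|_{L^1(w)}$, again landing on $\frac{4}{\eta}[w]_{A_\infty}$. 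Your principal-cube forest replaces that layer-cake bookkeeping by the pointwise bound $\sum_{F\in\mathcal{F}}\|f\|_{\Psi(L),F}\chi_F\le 2M_{\Psi(L)}f$; this is a perfectly good trade, and, like the original, it never uses any boundedness of $M_{\Psi(L)}$. Two details are worth making explicit. First, the geometric-sum claim needs the observation that consecutive principal cubes containing a given point are parent and child (two children of the same principal cube are incomparable by maximality; then induct down the chain), which is what guarantees that the Luxemburg averages at least double going down. Second, an infinite sparse family need not possess maximal elements (an increasing chain of dyadic cubes is $\frac{1}{2}$-sparse), so to start your forest --- and, for that matter, to select the maximal cubes of each $\mathcal{S}_k$ in the original proof --- one should first establish the estimate for finite subfamilies, where the bound is uniform, and conclude by monotone convergence.
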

We recall that $\Psi:[0,\infty)\rightarrow[0,\infty)$ is a Young function if it is a convex, increasing function such that $\Psi(0)=0$. We define the local Orlicz norm associated to a Young function $\Psi$ as 
\[\|f\|_{\Psi(L)(\mu),E}=\inf\left\{ \lambda>0:\frac{1}{\mu(E)} \int_E\Psi \left(\frac{|f|}{\lambda}\right) d\mu\leq 1\right\}\]
where $E$ is a set of finite measure. We note that in the case $\Psi(t)=t^r$ we recover the standard $L^r$ local norm. We shall drop $\mu$ from the notation in the case of the Lebesgue measure and write $w$ instead of $wdx$ for measures that are absolutely continuous with respect to the Lebesgue measure. 

Using the local the preceding definition of local norm, we can define the maximal function associated to a Young function $\Psi$  in the natural way,  
\[M_{\Psi(L)} f(x)=\sup_{x\in Q}\|f\|_{\Psi(L)(\mu),Q}.\]
We end this section recalling two basic estimates that work for doubling measures. The first of them is a particular case of the generalized H\"older inequality and the second can be derived, for example, from \cite[Lemma 4.1]{CLO}.

\begin{align}
\begin{split}
&\frac{1}{\mu(Q)}\int_Q|f-f_Q||g|d\mu\leq\|f-f_Q\|_{\exp L(\mu),Q}\|g\|_{L\log L(\mu),Q} \\
&\leq c_n\|b\|_{\BMO(\mu)}\|g\|_{\exp L(\mu),Q}\\
\end{split}\label{HGenBMOLlogL}\\
&\|f\|_{\exp L(\mu),Q}\leq c_nr'\left(\frac{1}{\mu(Q)}\int_Qw^r d\mu\right)^\frac{1}{r}\quad r>1 \label{eq:desig}
\end{align}
For a detailed account of local Orlicz norms and maximal functions associated to Young functions we encourage the reader to consult references such as \cite{RR}, \cite{PKJF}, \cite{PLondon} or \cite{CUMP}.

\section{Proof of Theorem \ref{Thm:Sparse}}\label{Sec:Sparse}
The proof of Theorem \ref{Thm:Sparse} relies upon techniques recently developed by A. K. Lerner in \cite{LRough}. Given an operator $T$ we define the bilinear operator $\mathcal{M}_T$ by
\[\mathcal{M}_T(f,g)(x)=\sup_{Q\ni x}\frac{1}{|Q|}\int_Q|T(f\chi_{{\mathbb R}^n\setminus 3Q})||g|dy,\]
where the supremum is taken over all cubes $Q\subset {\mathbb R}^n$ containing $x$.
Our first result provides a sparse domination principle based on that bilinear operator. 
\begin{theorem}\label{Thm:Sparse1}
Let $1\le q\le r$ and $s\ge 1$. Assume that $T$ is a sublinear operator of weak type $(q,q)$, and ${\mathcal M}_T$ maps
$L^r\times L^s$ into $L^{\nu,\infty}$, where $\frac{1}{\nu}=\frac{1}{r}+\frac{1}{s}$.
Then, for every compactly supported $f,g\in \mathcal{C}^\infty({\mathbb R}^n)$ and every $b\in \BMO$, there exist $3^n$ dyadic lattices $\mathcal{D}_j$ and $3^n$ sparse families ${\mathcal S}_j\subset\mathcal{D}_j$ such that
\begin{equation}\label{mainin}
|\langle [b,T]f,g \rangle|\le K\sum_{j=1}^\infty\left(\mathcal{T}_{\mathcal{S}_j,r,s}(b,f,g)+\mathcal{T}^*_{\mathcal{S}_j,r,s}(b,f,g)\right)
\end{equation}
where
\[
\begin{split}
\mathcal{T}_{\mathcal{S}_j,r,s}(b,f,g)&=\sum_{Q\in {\mathcal S}_j}\langle f\rangle_{r,Q}\langle (b-b_Q)g\rangle_{s,Q}|Q|\\
\mathcal{T}^*_{\mathcal{S}_j,r,s}(b,f,g)&=\sum_{Q\in {\mathcal S}_j}\langle(b-b_Q)f\rangle_{r,Q}\langle g\rangle_{s,Q}|Q|
\end{split}
\]
and $$K=C_n\big(\|T\|_{L^q\to L^{q,\infty}}+\|{\mathcal M}_T\|_{L^r\times L^s\to L^{\nu,\infty}}\big).$$
\end{theorem}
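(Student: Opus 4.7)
The plan is to establish the claim via Lerner's recursive sparse-domination scheme, adapted to commutators in the spirit of \cite{LORR} but with the weak-type boundedness of $\mathcal{M}_T$ replacing the classical Calder\'on-Zygmund kernel estimate. The driving identity is that for any constant $c$,
\[ [b,T]h = (b-c)\,Th - T\big((b-c)h\big), \]
and taking $c=b_{3Q}$ at scale $Q$ decouples the two shapes of bilinear form appearing in the conclusion.

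First I would fix a cube $Q_0 \in \mathcal{D}$ containing the supports of $f$ and $g$ (which exists by compact support) and reduce the theorem to the following local recursive inequality: there is a pairwise disjoint family $\{P_j\}\subset \mathcal{D}(Q_0)$ with $\sum_j |P_j|\le \tfrac12 |Q_0|$ such that
\[ \Big|\int_{Q_0}[b,T](f\chi_{3Q_0})\,g\,dx\Big| \le C_n K\,\Lambda(Q_0)\,|3Q_0| + \sum_j \Big|\int_{P_j}[b,T](f\chi_{3P_j})\,g\,dx\Big|, \]
where $\Lambda(Q_0)=\langle f\rangle_{r,3Q_0}\langle(b-b_{3Q_0})g\rangle_{s,3Q_0}+\langle(b-b_{3Q_0})f\rangle_{r,3Q_0}\langle g\rangle_{s,3Q_0}$. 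Iterating this inequality yields a sparse subfamily of $\mathcal{D}(Q_0)$ whose $3$-dilations, via the $3^n$-dyadic-lattices lemma (Lemma \ref{Lem:3nDL}), split into $3^n$ genuine sparse families $\mathcal{S}_j\subset \mathcal{D}_j$, producing the stated bound.

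To prove the local inequality I would apply the commutator identity with $c=b_{3Q_0}$, rewriting the left-hand side as
\[ \int_{Q_0} T(f\chi_{3Q_0})\,(b-b_{3Q_0})g\,dx - \int_{Q_0} T\big((b-b_{3Q_0})f\chi_{3Q_0}\big)\,g\,dx, \]
and then construct an exceptional set $E \subset Q_0$ as the union of the level sets where $M^{Q_0}_q(f\chi_{3Q_0})$ or $M^{Q_0}_q((b-b_{3Q_0})f\chi_{3Q_0})$ exceeds a constant multiple of the corresponding $L^q$-average over $3Q_0$ (controlled by the weak $(q,q)$ hypothesis on $T$), together with the level sets where $\mathcal{M}_T$ applied to $(f\chi_{3Q_0},(b-b_{3Q_0})g)$ or to $((b-b_{3Q_0})f\chi_{3Q_0},g)$ exceeds a constant multiple of $K\Lambda(Q_0)$ (controlled by the $L^r\times L^s\to L^{\nu,\infty}$ hypothesis on $\mathcal{M}_T$). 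Fixing the thresholds so that $|E|\le |Q_0|/2^{n+2}$, I would take $\{P_j\}$ to be the maximal dyadic subcubes of $Q_0$ for which $|P_j\cap E|>|P_j|/2$; these satisfy $\sum_j|P_j|\le\tfrac12|Q_0|$, and each $3P_j$ meets $Q_0\setminus E$. On $Q_0\setminus \bigcup_j P_j$ the definition of $E$ provides pointwise $\mathcal{M}_T$-bounds that integrate to $K\Lambda(Q_0)|3Q_0|$. Inside each $P_j$, the splitting $T(h\chi_{3Q_0})=T(h\chi_{3Q_0\setminus 3P_j})+T(h\chi_{3P_j})$ applied to both integrands converts the exterior part into an $\mathcal{M}_T$-controlled quantity evaluated at a good point in $3P_j\setminus E$, while the two interior contributions recombine, via the identity applied in reverse, into $\int_{P_j}[b,T](f\chi_{3P_j})g\,dx$, which is precisely the recursion term.

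The main obstacle is orchestrating the exceptional set so that four distinct maximal quantities (two from the weak $(q,q)$-bound on $T$ and two from $\mathcal{M}_T$) simultaneously lie below threshold on a set of more than half of $Q_0$, while producing a single clean bound of size $K\Lambda(Q_0)|3Q_0|$. A secondary delicate point is the recombination step inside each $P_j$: one must verify that the residues from the two pieces of the commutator identity align exactly with $[b,T](f\chi_{3P_j})$, which is legitimate because $[b-c,T]=[b,T]$ for any constant $c$, so the bookkeeping of which mean value has been subtracted does not matter once the local integrals are reassembled.
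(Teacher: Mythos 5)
Your architecture is the same as the paper's: subtract a mean of $b$ at each scale via $[b,T]h=(b-c)Th-T((b-c)h)$, build an exceptional set from four quantities (two level sets controlled by the weak $(q,q)$ hypothesis on $T$ applied to $f\chi_{3Q_0}$ and $(b-c)f\chi_{3Q_0}$ --- note these must be level sets of $|T(\cdot)|$ itself, which is what your parenthetical indicates, not of a maximal function of $f$ --- and two level sets of the local analogue of $\mathcal{M}_T$ applied to the pairs), a stopping decomposition with $\sum_j|P_j|\le\frac12|Q_0|$, the recursion on $\int_{P_j}[b,T](f\chi_{3P_j})g$, and Lemma \ref{Lem:3nDL} at the end; the merging of the diagonal terms into $\Lambda(Q_0)$ uses H\"older with $q\le r$, $s\ge1$, as in the paper. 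Your endgame is a legitimate minor variant: since both $f$ and $g$ are compactly supported you work with a single $Q_0$ and take the sparse cubes to be the dilates $3Q\in\cup_j\mathcal{D}_j$ with constants $b_{3Q}$, whereas the paper partitions ${\mathbb R}^n$ into cubes $R_j$ with $\mathrm{supp}\,f\subset 3R_j$ and subtracts $b_{R_Q}$ for the cube $R_Q\in\mathcal{D}_j$ containing $3Q$; your version is consistent because all cubes in your iteration lie in one dyadic lattice extending $\mathcal{D}(Q_0)$.

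There is, however, a genuine flaw in your stopping-time step. You select the maximal dyadic $P_j\subset Q_0$ with $|P_j\cap E|>|P_j|/2$ and then rely on a good point merely in $3P_j\setminus E$. With this threshold a selected $P_j$ can be entirely contained in $E$ (the true statement, via the unselected parent, is only that $3P_j$ meets $Q_0\setminus E$), and a good point $x_0\in 3P_j\setminus P_j$ is useless: $\mathcal{M}_{T}(\cdot,\cdot)(x_0)$ only bounds averages $\frac{1}{|Q|}\int_Q|T(\cdot\,\chi_{{\mathbb R}^n\setminus 3Q})||\cdot|$ over cubes $Q$ containing $x_0$, with the excised region $3Q$ tied to that same $Q$. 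Since $x_0\notin P_j$ you cannot take $Q=P_j$, and for any admissible $Q\supsetneq P_j$ the discrepancy $T(f\chi_{3Q\setminus 3P_j})$ on $P_j$ is precisely the comparable-scale local contribution the scheme is designed never to estimate pointwise (for rough $T$ there is no such control). The repair is exactly the paper's choice of height: run the Calder\'on--Zygmund decomposition of $\chi_E$ at height $2^{-(n+1)}$ (equivalently, select maximal cubes with $|P\cap E|>2^{-(n+1)}|P|$), so that by the unselected parent one gets $|P_j\cap E|\le 2^{n}\cdot 2^{-(n+1)}|P_j|=\frac12|P_j|$; then $P_j\setminus E\neq\emptyset$ and the good point lies in $P_j$ itself, while one still has $|E\setminus\cup_jP_j|=0$ and $\sum_j|P_j|\le 2^{n+1}|E|\le\frac12|Q_0|$ provided the thresholds $A_i$ are chosen so that $|E|\le 2^{-(n+2)}|Q_0|$. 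With this correction your argument coincides with the paper's proof.
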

It is possible to relax the condition imposed on $b$ for this result and the subsequent ones, but we restrict ourselves to this choice for the sake of clarity.
\begin{proof}[Proof of Theorem \ref{Thm:Sparse1}]By Lemma \ref{Lem:3nDL}, there exist $3^n$ dyadic lattices ${\mathcal D}_j$ such that
for every $Q\subset {\mathbb R}^n$, there is a cube $R=R_Q\in {\mathcal D}_j$ for some $j$, for which
$3Q\subset R_Q$ and $|R_Q|\le 9^n|Q|$.

Let us fix a cube $Q_0\subset {\mathbb R}^n$. Now we can define a local analogue of ${\mathcal M}_T$ by
$${\mathcal M}_{T,Q_0}(f,g)(x)=\sup_{Q\ni x,Q\subset Q_0}\frac{1}{|Q|}\int_Q|T(f\chi_{3Q_0\setminus 3Q})||g|dy.$$
We define the sets $E_i$ $i=1,\dots 4$ as follows
\[\begin{split}E_1&=\{x\in Q_0: |T(f\chi_{3Q_0})(x)|>A_1\langle f\rangle_{q,3Q_0}\},\\
E_2&=\{x\in Q_0: {\mathcal M}_{T,Q_0}(f,g(b-b_{R_{Q_0}}))(x)>A_2\langle f\rangle_{r,3Q_0}\langle g(b-b_{R_{Q_0}})\rangle_{s,Q_0}\},\\
E_3&=\{x\in Q_0: |T(f\chi_{3Q_0}(b-b_{R_{Q_0}}))(x)|>A_3\langle f(b-b_{R_{Q_0}})\rangle_{q,3Q_0}\},\\
E_4&=\{x\in Q_0: {\mathcal M}_{T,Q_0}(f(b-b_{R_{Q_0}}),g)(x)>A_4\langle (b-b_{R_{Q_0}})f\rangle_{r,3Q_0}\langle g\rangle_{s,Q_0}\}.
\end{split}
\]
We can choose $A_i$ in such a way that
\[\max(|E_1|,|E_2|,|E_3|,|E_4|)\le \frac{1}{2^{n+5}}|Q_0|.\]
Actually it suffices to take 
\[A_1,A_3=(c_n)^{1/q}\|T\|_{L^q\to L^{q,\infty}}\quad\text{and}\quad A_2,A_4=c_{n,r,\nu}\|{\mathcal M}_T\|_{L^r\times L^s\to L^{\nu,\infty}}\]
with $c_n, c_{n,r,\nu}$ large enough.
For this choice of $E_i$ the set $\Omega=\cup_iE_i$ satisfies $|\Omega|\le\frac{1}{2^{n+2}}|Q_0|$.

Now applying Calder\'on-Zygmund decomposition to the function $\chi_{\Omega}$ on $Q_0$ at height $\lambda=\frac{1}{2^{n+1}}$
we obtain pairwise disjoint cubes $P_j\in {\mathcal D}(Q_0)$ such that
$$\frac{1}{2^{n+1}}|P_j|\le |P_j\cap E|\le \frac{1}{2}|P_j|$$
and also $|\Omega\setminus \cup_jP_j|=0$. From the properties of the cubes it readily follows that $\sum_j|P_j|\le \frac{1}{2}|Q_0|$ and $P_j\cap \Omega^{c}\not=\emptyset$.

Now, since $|\Omega\setminus \cup_jP_j|=0$, we have that
\[\begin{split}
\int_{Q_0\setminus \cup_jP_j}|T(f\chi_{3Q_0})||(b-b_{R_{Q_0}})g|&\le A_1\langle f\rangle_{q,3Q_0}\int_{Q_0}|g(b-b_{R_{Q_0}})|\\
\int_{Q_0\setminus \cup_jP_j}|T((b-b_{R_{Q_0}})f\chi_{3Q_0})||g|&\le A_3\langle (b-b_{R_{Q_0}})f\rangle_{q,3Q_0}\int_{Q_0}|g|.
\end{split}\]
Also, since $P_j\cap \Omega^{c}\not=\emptyset$, we obtain
\[\begin{split}\int_{P_j}|T((b-b_{R_{Q_0}})f\chi_{3Q_0\setminus 3P_j})||g|\le A_2\langle (b-b_{R_{Q_0}})f\rangle_{r,3Q_0}\langle g\rangle_{s,Q_0}|Q_0|\\
\int_{P_j}|T(f\chi_{3Q_0\setminus 3P_j})||(b-b_{R_{Q_0}})g|\le A_4\langle f\rangle_{r,3Q_0}\langle (b-b_{R_{Q_0}})g\rangle_{s,Q_0}|Q_0|.
\end{split}\]

Our next step is to observe that for any arbitrary pairwise disjoint cubes $P_j\in {\mathcal D}(Q_0)$,
\[
\begin{split}
&\int_{Q_0}|[b,T](f\chi_{3Q_0})||g|\\
&=\int_{Q_0\setminus \cup_jP_j}|[b,T](f\chi_{3Q_0})||g|+\sum_j\int_{P_j}|[b,T](f\chi_{3Q_0})||g|\\
&\le \int_{Q_0\setminus \cup_jP_j}|[b,T](f\chi_{3Q_0})||g|+\sum_j\int_{P_j}|[b,T](f\chi_{3Q_0\setminus3P_j})||g|\\
&+\sum_j\int_{P_j}|[b,T](f\chi_{3P_j})||g|.
\end{split}
\]
For the first two terms, using that $[b,T]f=[b-c,T]f$ for any $c\in {\mathbb R}$, we obtain
\begin{eqnarray}
&& \int_{Q_0\setminus \cup_jP_j}|[b,T](f\chi_{3Q_0})||g|+\sum_j\int_{P_j}|[b,T](f\chi_{3Q_0\setminus3P_j})||g| \nonumber\\
&&\le \int_{Q_0\setminus\cup_jP_j}|b-b_{R_{Q_0}}||T(f\chi_{3Q_0})||g|+\sum_j\int_{P_j}|b-b_{R_{Q_0}}||T(f\chi_{3Q_0\setminus 3P_j})||g|\nonumber\\
&&+\int_{Q_0\setminus\cup_jP_j}|T((b-b_{R_{Q_0}})f\chi_{3Q_0})||g|+\sum_j\int_{P_j}|T((b-b_{R_{Q_0}})f\chi_{3Q_0\setminus 3P_j})||g|.\nonumber
\end{eqnarray}
Therefore, combining all the preceding estimates with H\"older's inequality (here we take into account $q\le r$ and $s\ge 1$) and calling $A=\sum_i A_i$ we have that
\[\begin{split}
&\int_{Q_0}|[b,T](f\chi_{3Q_0})||g|\le\sum_j\int_{P_j}|[b,T](f\chi_{3P_j})||g|\\
 &+A \left(\langle f\rangle_{r,3Q_0}\langle (b-b_{R_{Q_0}})g\rangle_{s,Q_0}|Q_0|+\langle(b-b_{R_{Q_0}}) f\rangle_{r,3Q_0}\langle g\rangle_{s,Q_0}|Q_0|\right).
\end{split}
\]

Since $\sum_j|P_j|\le \frac{1}{2}|Q_0|$, iterating the above estimate, we obtain that there is a $\frac{1}{2}$-sparse family ${\mathcal F}\subset {\mathcal D}(Q_0)$
such that
\begin{equation}
\label{itpart}
\begin{split}
\int_{Q_0}|[b,T](f\chi_{3Q_0})||g|&\le A\sum_{Q\in {\mathcal F}}\langle (b-b_{R_{Q}})f\rangle_{r,3Q}\langle g\rangle_{s,Q}|Q|\\
&+ A\sum_{Q\in {\mathcal F}}\langle f\rangle_{r,3Q}\langle g(b-b_{R_{Q}})\rangle_{s,Q}|Q|
\end{split}
\end{equation}
To end the proof, take now a partition of ${\mathbb R}^n$ by cubes $R_j$ such that $\text{supp}\,(f)\subset 3R_j$ for each $j$. One way to do that is the following. We take a cube $Q_0$ such that
$\text{supp}\,(f)\subset Q_0$ and cover $3Q_0\setminus Q_0$ by $3^n-1$ congruent cubes $R_j$. Each of them satisfies $Q_0\subset 3R_j$. We continue covering in the same way $9Q_0\setminus 3Q_0$, and so on. The family of the resulting cubes of this process, including $Q_0$, satisfies the desired property.

Having such a partition, apply (\ref{itpart}) to each $R_j$. We obtain a $\frac{1}{2}$-sparse family ${\mathcal F}_j\subset {\mathcal D}(R_j)$ such that
\[
\begin{split}
\int_{R_j}|[b,T](f)||g|&\le A\sum_{Q\in {\mathcal F}_j}\langle (b-b_{R_{Q}})f\rangle_{r,3Q}\langle g\rangle_{s,Q}|Q|\\
&+ A\sum_{Q\in {\mathcal F}_j}\langle f\rangle_{r,3Q}\langle g(b-b_{R_{Q}})\rangle_{s,Q}|Q|
\end{split}
\]
Therefore, setting $\mathcal{F}=\cup_j\mathcal{F}_j$
\[\begin{split}
\int_{\mathbb{R}^n}|[b,T](f)||g|&\le A\sum_{Q\in {\mathcal F}}\langle (b-b_{R_{Q}})f\rangle_{r,3Q}\langle g\rangle_{s,Q}|Q|\\
&+ A\sum_{Q\in {\mathcal F}}\langle f\rangle_{r,3Q}\langle g(b-b_{R_{Q}})\rangle_{s,Q}|Q|.
\end{split}\]

Now since $3Q\subset R_Q$ and $|R_Q|\le 3^n|3Q|$, clearly $\langle h\rangle_{\alpha,{3Q}}\le c_n\langle h \rangle_{\alpha,{R_Q}}$. Further,
setting ${\mathcal S}_j=\{R_Q\in {\mathcal D}_j:Q\in {\mathcal F}\}$, and using that ${\mathcal F}$ is $\frac{1}{2}$-sparse,
we obtain that each family ${\mathcal S}_j$ is $\frac{1}{2\cdot 9^n}$-sparse. Hence

\[\begin{split}
\int_{\mathbb{R}^n}|[b,T](f)||g|&\le c_nA\sum_{j=1}^{3^n}\sum_{R\in {\mathcal S}_j}\langle (b-b_{R})f\rangle_{r,R}\langle g\rangle_{s,R}|R|\\
&+  c_nA\sum_{j=1}^{3^n}\sum_{R\in {\mathcal S}_j}\langle f\rangle_{r,R}\langle g(b-b_{R})\rangle_{s,R}|R|
\end{split}\]
and (\ref{mainin}) holds.
\end{proof}

Given $1\le p\le \infty$, we define the maximal operator ${\mathcal M}_{p,T}$ by
$${\mathcal M}_{p,T}f(x)=\sup_{Q\ni x}\left(\frac{1}{|Q|}\int_Q|T(f\chi_{{\mathbb R}^n\setminus 3Q})|^pdy\right)^{1/p}$$
(in the case $p=\infty$ we call ${\mathcal M}_{p,T}f(x)=M_Tf(x)$).

Our next step is to provide a suitable version of \cite[Corollary 3.2]{LRough} for the commutator. The result is the following.
\begin{cor}\label{Cor:Int}
Let $1\le q\le r$ and $s\ge 1$. Assume that $T$ is a sublinear operator of weak type $(q,q)$, and ${\mathcal M}_{s',T}$ is of weak type $(r,r)$.
Then, for every compactly supported $f,g\in \mathcal{C}^\infty({\mathbb R}^n)$ and every $b\in \BMO$, there exist $3^n$ dyadic lattices $\mathcal{D}_j$ and $3^n$ sparse families ${\mathcal S}_j\subset\mathcal{D}_j$ such that
\begin{equation}
|\langle [b,T]f,g \rangle|\le K\sum_{j=1}^\infty\left(\mathcal{T}_{\mathcal{S}_j,r,s}(b,f,g)+\mathcal{T}^*_{\mathcal{S}_j,r,s}(b,f,g)\right)
\end{equation}
where
\[
\begin{split}
\mathcal{T}_{\mathcal{S}_j,r,s}(b,f,g)&=\sum_{Q\in {\mathcal S}_j}\langle f\rangle_{r,Q}\langle (b-b_Q)g\rangle_{s,Q}|Q|\\
\mathcal{T}^*_{\mathcal{S}_j,r,s}(b,f,g)&=\sum_{Q\in {\mathcal S}_j}\langle(b-b_Q)f\rangle_{r,Q}\langle g\rangle_{s,Q}|Q|
\end{split}
\]
and
\[K=C_n\left(\|T\|_{L^q\to L^{q,\infty}}+\|{\mathcal M}_{s',T}\|_{L^r\to L^{r,\infty}}\right).\]
\end{cor}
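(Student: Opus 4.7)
The plan is to deduce Corollary \ref{Cor:Int} from Theorem \ref{Thm:Sparse1}. Since all the hypotheses of Theorem \ref{Thm:Sparse1} are present except for the assumption on $\mathcal{M}_T$, the task reduces to verifying that the weak-type hypothesis on $\mathcal{M}_{s',T}$ implies
\[
\|\mathcal{M}_T\|_{L^r\times L^s\to L^{\nu,\infty}}\leq C_n\|\mathcal{M}_{s',T}\|_{L^r\to L^{r,\infty}},\qquad \tfrac{1}{\nu}=\tfrac{1}{r}+\tfrac{1}{s},
\]
since then the conclusion and the shape of the constant $K$ follow at once by invoking Theorem \ref{Thm:Sparse1}.

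The first step is a pointwise bound. Applying H\"older's inequality with exponents $s'$ and $s$ inside the average defining $\mathcal{M}_T$,
\[
\frac{1}{|Q|}\int_Q|T(f\chi_{{\mathbb R}^n\setminus 3Q})||g|\,dy\leq \Bigl(\frac{1}{|Q|}\int_Q|T(f\chi_{{\mathbb R}^n\setminus 3Q})|^{s'}dy\Bigr)^{1/s'}\Bigl(\frac{1}{|Q|}\int_Q|g|^s dy\Bigr)^{1/s},
\]
and taking suprema over cubes $Q\ni x$ on both sides yields $\mathcal{M}_T(f,g)(x)\leq \mathcal{M}_{s',T}f(x)\cdot M_s g(x)$, where $M_s g=(M(|g|^s))^{1/s}$.

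The second step transfers this pointwise bound to a weak-type one. By hypothesis, $\mathcal{M}_{s',T}$ maps $L^r$ into $L^{r,\infty}$. Since $M$ is of weak type $(1,1)$, $M_s$ satisfies $\|M_s g\|_{L^{s,\infty}}\leq C_n\|g\|_{L^s}$. Applying H\"older's inequality in the Lorentz scale,
\[
\|uv\|_{L^{\nu,\infty}}\leq C_{r,s}\|u\|_{L^{r,\infty}}\|v\|_{L^{s,\infty}}\qquad (\tfrac{1}{\nu}=\tfrac{1}{r}+\tfrac{1}{s}),
\]
to $u=\mathcal{M}_{s',T}f$ and $v=M_s g$ produces the desired estimate on $\mathcal{M}_T$ with constant controlled by $C_n\|\mathcal{M}_{s',T}\|_{L^r\to L^{r,\infty}}$. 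Plugging this into Theorem \ref{Thm:Sparse1} gives exactly the stated sparse bilinear domination, with $K$ as in the statement.

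I do not expect a genuine obstacle: the only ingredients are H\"older's inequality (used twice, once pointwise and once in weak Lorentz spaces) and the application of Theorem \ref{Thm:Sparse1}. The main point to keep track of is that the Lorentz H\"older constant depends only on $r$, $s$, and the dimension, so it can be absorbed into the prefactor $C_n$ appearing in $K$.
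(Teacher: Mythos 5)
Your argument is correct and coincides with the paper's proof, which likewise reduces the corollary to the inequality $\|\mathcal{M}_T\|_{L^r\times L^s\to L^{\nu,\infty}}\le C_n\|\mathcal{M}_{s',T}\|_{L^r\to L^{r,\infty}}$ and an application of Theorem \ref{Thm:Sparse1}, deferring the verification (pointwise H\"older giving $\mathcal{M}_T(f,g)\le \mathcal{M}_{s',T}f\cdot M_s g$, then H\"older in weak Lorentz spaces) to \cite[Corollary 3.2]{LRough}, exactly as you spell out. The only cosmetic point is your remark that the weak-space H\"older constant depends on $r,s$: for $r,s\ge 1$ it is in fact bounded by an absolute constant, which is why it can be absorbed into the purely dimensional $C_n$ in $K$.
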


\begin{proof} The proof is the same as \cite[Corollary 3.2]{LRough}. 
It suffices to observe that
\[
\|{\mathcal M}_T\|_{L^r\times L^s\to L^{\nu,\infty}}\le C_n\|{\mathcal M}_{s',T}\|_{L^r\to L^{r,\infty}}\quad(1/\nu=1/r+1/s),
\]
and to apply Theorem \ref{Thm:Sparse1}.
\end{proof}
\begin{remark}\label{rem:Sparse}
At this point we would like to note that if $T$ is an $\omega$-Calder\'on-Zygmund operator, with $\omega$ satisfying a Dini condition, since $M_T$ is of weak-type $(1,1)$ with  \[\|M_T\|_{L^1\rightarrow L^{1,\infty}}\leq c_n\left(C_K+\|T\|_{L^2}+\|\omega\|_{\text{Dini}}\right)\] (see \cite{L}, also for the notation) and we have that \[\|T\|_{L^1\rightarrow L^{1,\infty}}\leq c_n\left(\|T\|_{L^2}+\|\omega\|_{\text{Dini}}\right),\]then from the preceding Corollary we recover a bilinear version of the sparse domination established in \cite{LORR}.
\end{remark}

In order to use Corollary \ref{Cor:Int} to obtain Theorem \ref{Thm:Sparse}, we need to borrow some results from \cite{LRough}.
Given an operator $T$, we define the maximal operator $M_{\lambda, T}$ by
\[M_{\lambda, T}f(x)=\sup_{Q\ni x}(T(f\chi_{{\mathbb R}^n\setminus 3Q})\chi_Q)^*(\lambda|Q|)\quad 0<\lambda<1.\]
That operator was proved to be of weak type $(1,1)$ in \cite{LRough} where the following estimate was established.
\begin{theorem}\label{Thm:logest} If $\Omega\in L^{\infty}(\mathbb{S}^{n-1})$, then
\begin{equation}\label{logestim}
\|M_{\lambda, T_{\Omega}}\|_{L^1\to L^{1,\infty}}\le C_n\|\Omega\|_{L^{\infty}(\mathbb{S}^{n-1})}\Big(1+\log\frac{1}{\lambda}\Big)\quad 0<\lambda<1.
\end{equation}
\end{theorem}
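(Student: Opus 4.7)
The plan is to prove the weak-type $(1,1)$ bound with the logarithmic growth via a Calderón--Zygmund decomposition of $f$ at height $\alpha$, combined with a careful analysis of the non-local part of $T_\Omega$ using the annular/Littlewood--Paley structure that is available even for rough $\Omega \in L^\infty(\mathbb{S}^{n-1})$. First I would fix $\alpha>0$ and reduce to bounding $|\{x : M_{\lambda,T_\Omega}f(x) > C_n\|\Omega\|_\infty(1+\log\tfrac1\lambda)\alpha\}|$ by $C\|f\|_1/\alpha$. Perform a standard CZ decomposition on $\{Mf > \alpha\}$, obtaining disjoint cubes $\{Q_j\}$ with $\sum|Q_j|\lesssim\|f\|_1/\alpha$, and split $f=g+b$, $b=\sum_j b_j$, with $b_j$ mean-zero and supported on $Q_j$, and $\|g\|_\infty\lesssim\alpha$. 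Discard the exceptional set $\bigcup_j 3Q_j$.

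For the good part, observe that $M_{\lambda,T_\Omega}g(x)\le\lambda^{-1/2}M_2(T_\Omega g)(x)$ by Chebyshev on the cube $Q$, and so $L^2$-boundedness $\|T_\Omega\|_{L^2\to L^2}\lesssim\|\Omega\|_\infty$ together with the bound $\|g\|_2^2\lesssim\alpha\|f\|_1$ gives a weak-type $(1,1)$ estimate for this piece with only a $\lambda^{-1/2}$ prefactor; a small refinement (using $L^p$ with $p\to 1^+$ or a truncation argument) in fact removes the $\lambda$-dependence for this term, since the decomposition structure of the rearrangement on $Q$ only interacts with the truncation, not with $g$. The net contribution is $C_n\|\Omega\|_\infty\|f\|_1/\alpha$, without any logarithm.

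For the bad part, I would split the supremum defining $M_{\lambda,T_\Omega}$ according to whether the cube $Q$ is large ($\ell(Q)\ge\ell(Q_j)$, the ``far away'' regime) or small ($\ell(Q)<\ell(Q_j)$). In the far regime one uses the mean-zero property of $b_j$, writes
\[
T_\Omega(b_j\chi_{\mathbb{R}^n\setminus 3Q})(y)=\int_{Q_j}\Bigl(\frac{\Omega((y-z)')}{|y-z|^n}-\frac{\Omega((y-c_j)')}{|y-c_j|^n}\Bigr)b_j(z)\,dz,
\]
and bounds the kernel difference on average via the known $L^r$-Hörmander-type condition for $\Omega\in L^\infty$; the rearrangement on $Q$ of the resulting expression is dominated pointwise by $CMf$ at any point of $Q\cap(\bigcup 3Q_j)^c$, giving a bound linear in $\alpha$ with no $\log$. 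The main obstacle, and the source of the $(1+\log\tfrac1\lambda)$ factor, is the near regime $\ell(Q)<\ell(Q_j)$, where no cancellation from $b_j$ is available. Here I would decompose $T_\Omega(b_j\chi_{\mathbb{R}^n\setminus 3Q})$ into dyadic annular pieces $T_\Omega^{(k)}$ with kernel supported in $2^k\ell(Q)\le|y-z|<2^{k+1}\ell(Q)$ for $k\ge 0$. For each $k$ the operator $y\mapsto T_\Omega^{(k)}(b_j\chi_{\mathbb{R}^n\setminus 3Q})$ restricted to $Q$ is, up to the factor $\|\Omega\|_\infty$, bounded in $L^1(Q,\tfrac{dy}{|Q|})$ by $Mf$ at a nearby CZ point, with a constant independent of $k$.

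The key quantitative step is then to show that only the $\sim\log(1/\lambda)$ annuli closest to $Q$ can contribute above the $\lambda$-rearrangement threshold: for each annulus, the subset of $y\in Q$ where $|T_\Omega^{(k)}(b_j\chi_{\mathbb{R}^n\setminus 3Q})(y)|$ exceeds a fixed multiple of its $L^1$-average has measure at most $2^{-k}|Q|$ (this is essentially a Kolmogorov-type rearrangement inequality exploiting the $L^{1,\infty}$-bound on annular truncations), so summing these measures and requiring the total to be $\le\lambda|Q|$ truncates the sum at $k\le\log_2(1/\lambda)$. Collecting $\log(1/\lambda)$ uniformly bounded annular contributions then produces the claimed $(1+\log\tfrac1\lambda)$-factor, while the weak-type $(1,1)$ constant for each fixed annulus feeds back $\|\Omega\|_\infty$. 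I expect the technical heart of the argument to be precisely the proof of this uniform annular Kolmogorov-type estimate, which is where the $L^\infty(\mathbb{S}^{n-1})$ hypothesis on $\Omega$ is used in its full strength.
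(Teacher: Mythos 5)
First, a point of comparison: the paper does not prove this theorem at all --- it is quoted verbatim from \cite{LRough}, where the proof goes through the decomposition $T_\Omega=\sum_{j\ge 0}\tilde T_j$ into Dini--Calder\'on--Zygmund pieces with exponentially decaying $L^2$ norms (built on the Fourier-analytic estimates of \cite{S} and \cite{HRT}), weak $(1,1)$ bounds for the grand maximal truncations of each smooth piece, and subadditivity of rearrangements to distribute $\lambda$ among the pieces; the logarithm comes from balancing the $2^{-\delta j}$ decay against the growth of the Dini constants. Your attempt at a direct Calder\'on--Zygmund proof has a fatal gap exactly where the whole difficulty of the theorem sits: in the ``far'' regime you bound the kernel difference $\frac{\Omega((y-z)')}{|y-z|^n}-\frac{\Omega((y-c_j)')}{|y-c_j|^n}$ ``via the known $L^r$-H\"ormander-type condition for $\Omega\in L^\infty$''. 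No such condition holds for general $\Omega\in L^\infty(\mathbb{S}^{n-1})$: the angular increment $\Omega((y-z)')-\Omega((y-c_j)')$ has no smallness whatsoever when $\Omega$ is merely bounded, each dyadic annulus contributes a fixed amount to the H\"ormander integral and the sum diverges. Indeed, if such a condition were available, the classical weak $(1,1)$ bound for $T_\Omega$ itself would follow from standard Calder\'on--Zygmund theory, whereas it required Seeger's argument \cite{S}; so this step cannot be ``the easy regime with no $\log$'', it is the heart of the matter.

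The two other load-bearing claims are also unsubstantiated. For the good part, Chebyshev gives $M_{\lambda,T_\Omega}g\le \lambda^{-1/2}\mathcal{M}_{2,T_\Omega}g$, and your proposed fix ``$L^p$ with $p\to 1^+$'' goes in the wrong direction, since the loss $\lambda^{-1/p}$ worsens as $p\downarrow 1$; what does work is to take $p\simeq 1+\log\frac1\lambda$, so that $\lambda^{-1/p}\le e$, at the price of $\|T_\Omega\|_{L^p\to L^p}\lesssim p\,\|\Omega\|_{L^\infty(\mathbb{S}^{n-1})}$ --- in other words the good part genuinely consumes the logarithmic threshold, contrary to your claim that it contributes ``without any logarithm''. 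Finally, the ``annular Kolmogorov-type estimate'' asserting that the set where $|T^{(k)}_\Omega(b_j\chi_{\mathbb{R}^n\setminus 3Q})|$ exceeds a fixed multiple of its $L^1$-average has measure at most $2^{-k}|Q|$ is precisely the quantitative mechanism that would produce the $(1+\log\frac1\lambda)$ factor, and it is asserted rather than proved; Chebyshev only yields a fixed fraction of $|Q|$, not geometric decay in $k$, and nothing in the $L^\infty$ hypothesis alone supplies that decay. As it stands the proposal is a heuristic outline whose three key steps (H\"ormander-type regularity, removal of the $\lambda$-loss on the good part, and the annular decay) are either false or missing, and it does not recover the argument of \cite{LRough} that the paper relies on.
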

Also in  \cite{LRough} the following result showing the relationship between the $L^1\to L^{1,\infty}$ norms of the operators $M_{\lambda, T}$ and ${\mathcal M}_{p,T}$ was provided.

\begin{lemma}\label{Lem:equiv} Let $0<\gamma\le 1$ and let $T$ be a sublinear operator. The following statements are equivalent:
\begin{enumerate}
\item
there exists $C>0$ such that for all $p\ge 1$,
\[\|{\mathcal M}_{p,T}f\|_{L^1\to L^{1,\infty}}\le Cp^{\gamma};\]
\item
there exists $C>0$ such that for all $0<\lambda<1$,
\[\|M_{\lambda,T}f\|_{L^1\to L^{1,\infty}}\le C\left(1+\log\frac{1}{\lambda}\right)^{\gamma}.\]
\end{enumerate}
\end{lemma}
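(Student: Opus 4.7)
The proof rests on two elementary rearrangement facts. For any measurable $g$ and cube $Q$,
\[
\frac{1}{|Q|}\int_Q |g|^p\,dy=\int_0^1 \bigl((g\chi_Q)^*(\lambda|Q|)\bigr)^p\,d\lambda,\qquad (g\chi_Q)^*(\lambda|Q|)\le \lambda^{-1/p}\Bigl(\frac{1}{|Q|}\int_Q|g|^p\Bigr)^{1/p},
\]
the first being the layer-cake identity and the second a Chebyshev-type estimate. Applied with $g=T(f\chi_{{\mathbb R}^n\setminus 3Q})$ these two relations bridge $\mathcal{M}_{p,T}f$ and $M_{\lambda,T}f$ both pointwise and distributionally, so the plan is to combine them with an appropriate optimization/aggregation in the parameter that is dual to $p$.

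For the direction $(1)\Rightarrow(2)$, the Chebyshev inequality immediately gives the pointwise estimate $M_{\lambda,T}f(x)\le \lambda^{-1/p}\mathcal{M}_{p,T}f(x)$ for every $p\ge 1$, so taking weak $L^1$ norms and using (1) yields
\[
\|M_{\lambda,T}f\|_{L^{1,\infty}}\le C\lambda^{-1/p}p^{\gamma}\|f\|_{L^1}.
\]
I would optimize the exponent $p\ge 1$ by the choice $p=\max\bigl(1,\log(1/\lambda)/\gamma\bigr)$; a direct calculation shows that this minimizes $\lambda^{-1/p}p^{\gamma}$ up to a constant depending only on $\gamma$, producing the desired factor $(1+\log(1/\lambda))^{\gamma}$.

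For the converse direction $(2)\Rightarrow(1)$, the layer-cake identity combined with the obvious inequality $(T(f\chi_{{\mathbb R}^n\setminus 3Q})\chi_Q)^*(\lambda|Q|)\le M_{\lambda,T}f(x)$ (valid for every $Q\ni x$) upgrades, after taking the supremum over $Q\ni x$, to the pointwise bound
\[
\mathcal{M}_{p,T}f(x)\le \Bigl(\int_0^1 M_{\lambda,T}f(x)^p\,d\lambda\Bigr)^{1/p}.
\]
Since $\lambda\mapsto M_{\lambda,T}f(x)$ is non-increasing, the integral is controlled by a weighted dyadic sum $\sum_{k\ge 0}2^{-k-1}M_{2^{-k-1},T}f(x)^p$, to which I would apply hypothesis (2) at each scale $\lambda_k=2^{-k-1}$, whose weak-type norm is $C(k+1)^{\gamma}\|f\|_{L^1}$, and then collate the pieces in $L^{1,\infty}$.

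The hard part is precisely this collation step: a careless union bound with geometric splitting weights, balancing the $L^{1,\infty}$ quasi-norms $C(k+1)^{\gamma}2^{-k/p}$ across scales, tends to produce $p^{\gamma+1}$ rather than the desired $p^{\gamma}$. To recover the sharp exponent one must either employ a Kolmogorov-type inequality that exploits the monotonicity of $M_{\lambda,T}f$ more sharply, or work directly at the level set $\{\mathcal{M}_{p,T}f>\alpha\}$ via a Calder\'on--Zygmund type selection of cubes $Q$ and apply (2) at a single carefully tuned scale $\lambda=\lambda(\alpha,p)$ instead of integrating over all $\lambda$; this is the main technical step I would expect to absorb most of the argument's care.
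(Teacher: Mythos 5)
Your direction $(1)\Rightarrow(2)$ is complete and is the standard argument (Chebyshev plus the choice $p\simeq 1+\log(1/\lambda)$, which makes $\lambda^{-1/p}$ bounded). The problem is the converse. After the correct pointwise reduction $\mathcal{M}_{p,T}f(x)^p\le\int_0^1 M_{\lambda,T}f(x)^p\,d\lambda\le\sum_{k\ge0}2^{-k-1}M_{2^{-k-1},T}f(x)^p$, you stop at the (accurate) diagnosis that a union bound over the scales $k$ loses a factor of $p$, and neither of the two remedies you name is carried out. Moreover, the second remedy --- applying $(2)$ at a single tuned scale $\lambda(\alpha,p)$ --- cannot work by itself: the portion of the layer-cake integral over $\lambda<2^{-p}$, i.e.\ the largest $2^{-p}|Q|$-fraction of the values of $T(f\chi_{\mathbb{R}^n\setminus3Q})$ on $Q$, is not controlled by any single scale, so a Calder\'on--Zygmund selection at one $\lambda$ leaves that part unestimated. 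Note also that the present paper offers no proof to compare with: the lemma is quoted from \cite{LRough}, so the only question is whether your sketch closes, and as written it does not (it proves $(2)\Rightarrow(1)$ only with $p^{\gamma+1}$).

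The missing idea is a blocking of scales adapted to $p$, using the monotonicity of the rearrangement exactly where your sketch stalls. Fix $Q\ni x$ and $g=T(f\chi_{\mathbb{R}^n\setminus3Q})$, and split $\int_0^1((g\chi_Q)^*(t|Q|))^p\,dt$ over $[2^{-p},1]$ and the blocks $[2^{-2^{i}p},2^{-2^{i-1}p}]$, $i\ge1$: since $(g\chi_Q)^*$ is non-increasing, the first piece is at most $((g\chi_Q)^*(2^{-p}|Q|))^p$ and the $i$-th block is at most $2^{-2^{i-1}p}((g\chi_Q)^*(2^{-2^{i}p}|Q|))^p$. Taking $p$-th roots (and $(\sum a_i^p)^{1/p}\le\sum a_i$) and then the supremum over $Q\ni x$ gives the pointwise bound
\[
\mathcal{M}_{p,T}f(x)\le M_{2^{-p},T}f(x)+\sum_{i\ge1}2^{-2^{i-1}}M_{2^{-2^{i}p},T}f(x).
\]
Now hypothesis $(2)$ bounds the weak $L^1$ norms of the summands by $C(1+p)^\gamma$ and $C2^{-2^{i-1}}(1+2^{i}p)^\gamma\le C2^{-2^{i-1}}2^{i\gamma}(1+p)^\gamma$ respectively; these decay superexponentially in $i$, so even the crude weighted union bound $\|\sum_{i\ge0}h_i\|_{L^{1,\infty}}\le\sum_{i\ge0}2^{i+1}\|h_i\|_{L^{1,\infty}}$ costs only a constant depending on $\gamma$, and one obtains $\|\mathcal{M}_{p,T}f\|_{L^{1,\infty}}\le C_\gamma C\,p^{\gamma}\|f\|_1$. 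In short, the $\sim p$ comparable scales between $1$ and $2^{-p}$, which are what produce your extra factor $p$, must be collapsed by monotonicity into a single term at scale $2^{-p}$, with the remaining scales grouped doubly exponentially; this (or an equivalent device) is the step your proposal is missing.
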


At this point we are in the position to prove that Theorem \ref{Thm:Sparse} follows as a corollary from the previous results.
\begin{proof}[Proof of Theorem \ref{Thm:Sparse}] Theorem \ref{Thm:logest} combined with Lemma \ref{Lem:equiv} with $\gamma=1$ yields
\[\|{\mathcal M}_{p,T_{\Omega}}\|_{L^1\to L^{1,\infty}}\le c_np\|\Omega\|_{L^{\infty}(\mathbb{S}^{n-1})}\] with $p\geq 1$. Also, by \cite{S}, we have that \[\|T_{\Omega}\|_{L^1\to L^{1,\infty}}\le C_n\|\Omega\|_{L^{\infty}(\mathbb{S}^{n-1})}.\]
Hence, by Corollary \ref{Cor:Int} with $q=r=1$ and $s=p>1$,  there exist $3^n$ dyadic lattices $\mathcal{D}_j$ and $3^n$ sparse families ${\mathcal S}_j\subset{\mathcal D}_j$ such that
\[|\langle [b,T_{\Omega}]f,g \rangle|\le C_np'\|\Omega\|_{L^{\infty}(\mathbb{S}^{n-1})}\sum_{j=1}^{3^n}\left(\mathcal{T}_{\mathcal{S}_j,1,p}(b,f,g)+\mathcal{T}_{\mathcal{S}_j,1,p}(b,f,g)\right).\]\end{proof}
\section{Proof of Theorem \ref{Thm:A1}}\label{sec:PThmA1}
We start providing a proof for \eqref{eq:Mr}. We follow some of the key ideas from \cite{LOP1,LOP2} (see also \cite{PRRR}).
By duality, it suffices to prove \eqref{eq:Mr} it suffices to show that
\[
\left\| \frac{[b,T_\Omega]f}{M_{r}w}\right\|_{L^{p'}(M_{r}w)}\leq c_n\|\Omega\|_{L^{\infty}(\mathbb{S}^{n-1})}\|b\|_{\BMO}(p')^3p^{2}(r')^{1+\frac{1}{p'}}
\Big\| \frac{f}{w}\Big\|_{L^{p'}(w)}.
\]
We can calculate the norm by duality. Then,
\[
\bigg\| \frac{[b,T_\Omega]f}{M_{r}w}\bigg\|_{L^{p'}(M_{r}w)}=\sup_{\|h\|_{L^{p}(M_{r}w)}=1}\left|
 \int_{\mathbb{R}^{n}}  [b,T_{\Omega}]f(x) h(x)dx\right|. 
\]
Let us define now a Rubio de Francia algorithm suited for this si\-tua\-tion (see \cite[Chapter IV.5]{GCRdF} and \cite{CUMP} for plenty of applications of the Rubio de Francia algorithm).
First we consider the operator
\[S(f)=\frac{M\big(f(M_{r}w)^{\frac{1}{p}}\big)}{(M_{r}w)^{\frac{1}{p}}}\]
and we observe that $S$ is bounded on $L^{p}(M_{r}w)$ with norm bounded by a dimensional multiple of $p'$.
Relying upon $S$ we define
\[
R(h)=\sum_{k=0}^{\infty}\frac{1}{2^{k}}\frac{S^{k}h}{\|S\|_{L^{p}(M_{r}w)}^{k}}.
\]
This operator has the following properties:
\begin{enumerate}
\item[(a)] $0\leq h\leq R(h)$,
\item[(b)] $\|Rh\|_{L^{p}(M_{r}w)}\leq2\|h\|_{L^{p}(M_{r}w)}$,
\item[(c)] $R(h)(M_{r}w)^{\frac{1}{p}}\in A_{1}$ with $[R(h)(M_{r}w)^{\frac{1}{p}}]_{A_{1}}\leq cp'$. We also note that $[Rh]_{A_{\infty}}\leq[Rh]_{A_{3}}\leq c_{n}p'$.
\end{enumerate}
Using Theorem \ref{Thm:Sparse} and taking into account (a) we have that,
\[\begin{split}
&\left|
 \int_{\mathbb{R}^{n}}  [b,T_{\Omega}]f(x) h(x)dx\right|\\
&\leq C_ns'\|\Omega\|_{L^{\infty}(\mathbb{S}^{n-1})}\sum_{j=1}^\infty\left(\mathcal{T}_{\mathcal{S}_j,1,s}(b,f,h)+\mathcal{T}^*_{\mathcal{S}_j,1,s}(b,f,h)\right)\\
&\leq C_ns'\|\Omega\|_{L^{\infty}(\mathbb{S}^{n-1})}\sum_{j=1}^\infty\left(\mathcal{T}_{\mathcal{S}_j,1,s}(b,f,Rh)+\mathcal{T}^*_{\mathcal{S}_j,1,s}(b,f,Rh)\right)
\end{split}\]
and it suffices to obtain estimates for 
\[
I:=\mathcal{T}_{\mathcal{S}_j,1,s}(b,f,Rh)\qquad\text{and}\qquad II:=\mathcal{T}^*_{\mathcal{S}_j,1,s}(b,f,Rh).
\]
First we focus on $I$. Now we choose $r,s>1$ such that $sr=1+\frac{1}{\tau_n[Rh]_{A_\infty}}$. For instance, choosing $r=1+\frac{1}{2\tau_n[Rh]_{A_\infty}}$ we have that $s=2\frac{1+\tau_n[Rh]_{A_\infty}}{1+2\tau_n[Rh]_{A_\infty}}$ and also that  $sr'=2(1+\tau_n[Rh]_{A_\infty})\simeq[Rh]_{A_\infty}$.
Now we recall that for every $0<t<\infty$ it was established in \cite[Corollary 3.1.8]{GMod} that 
\[\left(\frac{1}{|Q|}\int_{Q}|b(x)-b_{Q}|^t dx\right)^{\frac{1}{t}}\leq \left(t\Gamma(t)\right)^\frac{1}{t}e^{\frac{1}{t}+1}2^n\|b\|_{\BMO}\]
For $t>1$ we have that $\left(t\Gamma(t)\right)^\frac{1}{t}e^{\frac{1}{t}+1}2^n\leq c_nt$. 
Taking into account the preceding estimate, the choices for $r$ and $s$, the reverse H\"older inequality (Lemma \ref{Lem:RHI}), and the property (c) above, we have that
\begin{align*}I &\leq\sum_{Q\in\mathcal{S}_{j}}\left(\frac{1}{|Q|}\int_{Q}|b(x)-b_{Q}|^s|Rh(x)|^sdx\right)^{\frac{1}{s}}\int_{Q}|f|dy \\
&\leq\sum_{Q\in\mathcal{S}_{j}} \langle b-b_Q\rangle_{sr',Q}\langle Rh \rangle_{sr,Q}\int_{Q}|f|dy \\
 & \leq c_n(sr')\|b\|_{\BMO}\sum_{Q\in\mathcal{S}_{j}}\left(\frac{1}{|Q|}\int_{Q}Rh\right)\int_{Q}|f|dy\\
 & \leq c_n[Rh]_{A_{\infty}}\|b\|_{\BMO}\sum_{Q\in\mathcal{S}_{j}}Rh(Q)\frac{1}{|Q|}\int_{Q}|f|dy\\
 &\leq c_{n}p'\|b\|_{\BMO}\sum_{Q\in\mathcal{S}_{j}}Rh(Q)\frac{1}{|Q|}\int_{Q}|f|dy.
\end{align*}
An application of Lemma \ref{4.1HP2Gen} with $\Psi(t)=t$ yields
\[
\sum_{Q\in\mathcal{S}_{j}}Rh(Q)\frac{1}{|Q|}\int_{Q}|f|dy\leq8[Rh]_{A_{\infty}}\|Mf\|_{L^{1}(Rh)}\leq c_np'\|Mf\|_{L^{1}(Rh)}.
\]
From here
\[\begin{split}
\|Mf\|_{L^{1}(Rh)}&\leq\Big(\int_{\mathbb{R}^{n}}|Mf|^{p'}\big(M_{r}w\big)^{1-p'}\Big)^{\frac{1}{p'}}\Big(\int_{\mathbb{R}^{n}}(Rh)^{p}M_{r}w\Big)^{\frac{1}{p}}\\
&\leq2\Big\| \frac{Mf}{M_{r}w}\Big\| _{L^{p'}(M_{r}w)}.
\end{split}
\]
Now by \cite[Lemma 3.4]{LOP1} (see also \cite[Lemma 2.9]{OC})
\[
\Big\|\frac{Mf}{M_{r}w}\Big\|_{L^{p'}(M_{r}w)}\leq c p(r')^{\frac{1}{p'}}\Big\|\frac{f}{w}\Big\|_{L^{p'}(w)}.
\]
Gathering all the preceding estimates we have that
\[
I\leq c_{n}\|b\|_{\BMO}p(p')^{3}(r')^{\frac{1}{p'}}\left\Vert \frac{f}{w}\right\Vert _{L^{p'}(w)}.
\]

Now we turn our attention to $II$. Recalling that we have chosen $rs=1+\frac{1}{\tau_n[Rh]_{A_\infty}}$, taking into account the Reverse H\"older inequality and applying also \eqref{HGenBMOLlogL} we have that
\[\begin{split}
II&\leq\sum_{Q\in\mathcal{S}_{j}}\Big(\frac{1}{|Q|}\int_{Q}|b(y)-b_{Q}|f(y)dy\Big)\langle Rh\rangle_{s,Q}|Q|\\
&\leq\sum_{Q\in\mathcal{S}_{j}}\Big(\frac{1}{|Q|}\int_{Q}|b(y)-b_{Q}|f(y)dy\Big)\langle Rh\rangle_{rs,Q}|Q|\\
&\leq c_n\|b\|_{\BMO}\sum_{Q\in\mathcal{S}_{j}}\|f\|_{L\log L,Q}Rh(Q).
\end{split}
\]
Then a direct application of Lemma \ref{4.1HP2Gen} with $\Psi(t)=t\log(e+t)$ yields the following estimate
\[
\sum_{Q\in\mathcal{S}_{j}}\|f\|_{L\log L,Q}Rh(Q)\leq8[Rh]_{A_{\infty}}\|M_{L\log L}f\|_{L^{1}(Rh)}.
\]
Arguing as in the estimate of $I$,
\[
\|M_{L\log L}f\|_{L^{1}(Rh)}\leq2\Big\|\frac{M_{L\log L}f}{M_{r}w}\Big\| _{L^{p'}(M_{r}w)}.
\]
Now \cite[Proposition 3.2]{OC} gives
\[
\Big\|\frac{M_{L\log L}f}{M_{r}w}\Big\|_{L^{p'}(M_{r}w)}\leq c_{n}p^{2}(r')^{1+\frac{1}{p'}}\Big\|\frac{f}{w}\Big\|_{L^{p'}(w)}.
\]
Combining all the estimates we have that
\[
II\leq c_{n}\|b\|_{\BMO}(p')^2p^{2}(r')^{1+\frac{1}{p'}}\Big\| \frac{f}{w}\Big\|_{L^{p'}(w)}.
\]
Finally, collecting the estimates we have obtained for $I$ and $II$, we arrive at the desired bound, namely
\[
\Big\|\frac{[b,T_\Omega]f}{M_{r}w}\Big\|_{L^{p'}(M_{r}w)}\leq c_n\|\Omega\|_{L^{\infty}(\mathbb{S}^{n-1})}\|b\|_{\BMO}(p')^3p^{2}(r')^{1+\frac{1}{p'}}\Big\|\frac{f}{w}\Big\|_{L^{p'}(w)}.
\]
We end the proof observing that the $A_\infty$ and the $A_1-A_\infty$ results are a direct consequence of the estimate we have just established and of the Reverse-H\"older inequality (see \cite{LOP1, LOP2, HP} for this kind of argument).
\qed

\section{Proof of Theorem \ref{Thm:LM}}\label{Sec:ThmLM}
Let us consider first the case in which $T$ is a Calder\'on-Zygmund operator. 
Calculating the norm by duality we have that
\[\|[b,T]f\|_{L^p(w)}=\sup_{\|g\|_{L^{p'}(w)}=1}\left|\int [b,T](f) g w\right|.\]
Now taking into account Remark \ref{rem:Sparse} (or \cite{LORR}) we have that
\[\left|\int [b,T](f)gw\right|\leq c_nc_T\sum_{j=1}^{3^n}\left(\mathcal{T}_{\mathcal{S}_j,1,1}(b,f,gw)+ \mathcal{T}^*_{\mathcal{S}_j,1,1}(b,f,gw)\right)\]
so it suffices to provide estimates for 
\[\mathcal{T}_{\mathcal{S},1,1}(b,f,gw)\quad\text{and}\quad\mathcal{T}^*_{\mathcal{S},1,1}(b,f,gw).\]
First we work on $\mathcal{T}_{\mathcal{S}_j,1,1}(b,f,gw)$. Following ideas in \cite{Li} we have that
\[\langle w\rangle_Q\langle w^\frac{1}{1-q}\rangle^{q-1}=\langle w\rangle_Q\langle\sigma^{\frac{1}{p'}}\rangle^p_{\overline{A},Q}\]
where $\overline{A}(t)=t^\frac{p}{q-1}$ and $\sigma=w^{1-p'}$. Then, choosing $s<p'$ and taking into account \cite[Lemma 6]{IFRR}, \eqref{HGenBMOLlogL} and \eqref{eq:desig},
\begin{align*}
&\mathcal{T}_{\mathcal{S},1,1}(b,f,gw)= \sum_{Q\in \mathcal S}\langle f \rangle_Q \langle g(b-b_Q)w\rangle_Q|Q| \\
&\le c   \sum_{Q\in \mathcal S}\langle fw^{\frac{1}{p}}\rangle_{A,Q}\langle w^{-\frac{1}{p}}\rangle_{\overline{A},Q} \|g\|_{L\log L(w),Q}\|b-b_Q\|_{\exp L(w),Q} \\
&\le cs' \|b\|_{\BMO}[w]_{A_\infty} \sum_{Q\in \mathcal S}\langle fw^{\frac{1}{p}}\rangle_{A,Q}\langle w^{-\frac{1}{p}}\rangle_{\overline{A},Q} \langle g\rangle^w_{s,Q}\\
&\times \exp\left(\langle \log w^{-1}\rangle_Q\right)^{\frac 1{p'}}\exp\left(\langle \log w\rangle_Q\right)^{\frac 1{p'}}\\
&\le cs' \|b\|_{\BMO}[w]_{A_\infty}[w]_{A_q^{\frac 1{p}}(A_\infty^{\exp})^{\frac 1{p'}}} \Big( \sum_{Q\in \mathcal S} \langle fw^{\frac{1}{p}}\rangle_{A,Q}^p |Q| \Big)^{\frac{1}{p}}\\
&\times \Big( \sum_{Q\in \mathcal S}\left( \langle g\rangle^w_{s,Q}\right)^{p'} \exp\left(\langle \log w^{-1}\rangle_Q\right)^{\frac 1{p'}}|Q| \Big)^{\frac 1{p'}}\\
&\le c_n\gamma^{-1}\|M_A\|_{L^p}\|b\|_{\BMO}[w]_{A_\infty} [w]_{A_q^{\frac 1{p}}(A_\infty^{\exp})^{\frac 1{p'}}}\|f\|_{L^p(w)}\|g\|_{L^{p'}(w)},
\end{align*}
where in the last step we use the Carleson embedding Theorem \cite[Theorem 4.5]{HP} and the sparsity of $\mathcal{S}$.

Now we turn our attention to $\mathcal{T}^*_{\mathcal{S},1,1}(b,f,gw)$. We observe that for any $r>1$
\begin{align*}
\mathcal{T}^*_{\mathcal{S},1,1}(b,f,gw)&= \sum_{Q\in \mathcal S}\langle f(b-b_Q)\rangle_Q \langle gw\rangle_Q|Q| \\
&\le \sum_{Q\in \mathcal S}\langle f\rangle_{r,Q}\langle b-b_Q\rangle_{r',Q} \langle gw\rangle_Q|Q|\\
&\le c\|b\|_{\BMO} \sum_{Q\in \mathcal S} \langle f\rangle_{r,Q} \langle gw\rangle_Q|Q|\end{align*}
and from this point it suffices to follow the proof of \cite[Theorem 3.1]{Li} to obtain the following estimate
\[\mathcal{T}^*_{\mathcal{S},1,1}(b,f,gw)\leq c [w]_{A_q^{\frac 1{p}}(A_\infty^{\exp})^{\frac 1{p'}}}\|f\|_{L^p(w)}\|g\|_{L^{p'}(w)}.\]
Combining the estimates for $\mathcal{T}_{\mathcal{S},1,1}(b,f,gw)$ and $\mathcal{T}^*_{\mathcal{S},1,1}(b,f,gw)$ we obtain \eqref{eq:MixAq} in the case of $T$ being a Calder\'on-Zygmund operator. 

Let us consider now the remaining case. Assume that $T$ is a rough singular integral with $\Omega\in L^\infty(\mathbb{S}^{n-1})$.
Calculating the norm by duality and denoting by  $[b,T]^t$ the adjoint of $[b,T]$ we have that
\[\|[b,T]f\|_{L^p(w)}=\sup_{\|g\|_{L^{p'}(w)}=1}\left|\int [b,T](f) g w\right|=\sup_{\|g\|_{L^{p'}(w)}=1}\left|\int [b,T]^t(gw)f\right|.\]
Taking into account that $[b,T]^t$ is also a commutator we can use the sparse domination obtained in Theorem \ref{Thm:Sparse} so we have that
\[\left|\int [b,T]^t(gw)f\right|\leq c_nu'\|\Omega\|_{L^\infty(\mathbb{S}^{n-1})}\sum_{j=1}^{3^n}\left(\mathcal{T}_{\mathcal{S}_j,u,1}(b,f,gw)+ \mathcal{T}^*_{\mathcal{S}_j,u,1}(b,f,gw)\right)\]
and then the question reduces to control both
\[\mathcal{T}_{\mathcal{S}_j,u,1}(b,f,gw)\quad \text{and}\quad \mathcal{T}^*_{\mathcal{S}_j,u,1}(b,f,gw).\]
We begin observing that, arguing as before, choosing $1<s<p'$
\[\begin{split}
\mathcal{T}_{\mathcal{S}_j,u,1}(b,f,gw)&=\sum_{Q\in {\mathcal S}_j}\langle f\rangle_{u,Q}\langle (b-b_Q)gw\rangle_{1,Q}|Q|\\
&\leq  cs'[w]_{A_\infty}\|b\|_{\BMO}\sum_{Q\in {\mathcal S}_j}\langle f\rangle_{u,Q}\langle g\rangle^w_{s,Q}w(Q)=c[w]_{A_\infty}\|b\|_{\BMO}B_1.
\end{split}
\]
On the other hand we have that for $s_1>1$ to be chosen later
\[\begin{split}
\mathcal{T}^*_{\mathcal{S},u,1}(b,f,gw)&=\sum_{Q\in {\mathcal S}}\langle(b-b_Q)f\rangle_{u,Q}\langle gw\rangle_{1,Q}|Q|\\
&\leq\sum_{Q\in {\mathcal S}}\langle f\rangle_{us_1,Q}\langle b-b_Q\rangle_{us_1',Q}\langle gw\rangle_{1,Q}|Q|\\
&\leq c\|b\|_{\BMO}\sum_{Q\in {\mathcal S}}\langle f\rangle_{us_1,Q}\langle gw\rangle_{1,Q}|Q|=c\|b\|_{\BMO} B_2.
\end{split}
\]
By H\"older inequality, we have that both $B_1$ and $B_2$ are controlled by
\[\sum_{Q\in {\mathcal S}}\langle f\rangle_{us_1,Q}\langle g\rangle^w_{s,Q}w(Q).\]
We note that we can choose $us_1$ as close to $1$ as we want so let us rename $us_1=r$. Now
denoting $\overline{B}(t)=t^{\frac{p}{r(q-1)}}$ and arguing as in \cite[Theorem 3.1]{Li} we have that
\[\begin{split}
\sum_{Q\in {\mathcal S}_j}\langle f\rangle_{r,Q}\langle g\rangle^w_{s,Q}w(Q)
&\le [w]_{A_q^{\frac 1{p}}(A_\infty^{\exp})^{\frac 1{p'}}}\Big( \sum_{Q\in \mathcal S} \langle f^r w^{\frac rp}\rangle_{B,Q}^\frac pr |Q| \Big)^{\frac 1p}\\
&\times \Big( \sum_{Q\in \mathcal S} (\langle g \rangle_{s,Q}^w)^{p'} \exp(\langle \log w\rangle_Q)|Q| \Big)^{\frac 1{p'}}\\
&\le c_n \gamma^{-1}p\|M_B\|_{L^{p/r}}^{\frac 1r}[w]_{A_q^{\frac 1{p}}(A_\infty^{\exp})^{\frac 1{p'}}}\|f\|_{L^p(w)}\|g\|_{L^{p'}(w)}
\end{split}
\]
where in the last step we have used again the sparsity of $\mathcal{S}$ and the Carleson embedding theorem (\cite[Theorem 4.5]{HP}). 
Collecting all the estimates 
\[\left|\int [b,T]^t(gw)f\right|\leq c_n\|\Omega\|_{L^\infty(\mathbb{S}^{n-1})}[w]_{A_\infty}[w]_{A_q^{\frac 1{p}}(A_\infty^{\exp})^{\frac 1{p'}}}\|f\|_{L^p(w)}\|g\|_{L^{p'}(w)}.\]
This ends the proof of Theorem  \ref{Thm:LM}.\qed

\section*{acknoledgements}The author would like to thank Kangwei Li for some insightful discussions during the elaboration of this paper.

\end{document}